\documentclass[english]{article}
\usepackage[latin9]{inputenc}
\usepackage{amsmath}
\usepackage{amssymb}
\usepackage{esint}

\makeatletter

\title{Newtheorem and theoremstyle test}
\author{Michael Downes\\updated by Barbara Beeton}

\usepackage[exercise]{amsthm}

\newtheorem{thm}{Theorem}[section]

\theoremstyle{plain}
\newtheorem{letterthm}{Theorem}
\theoremstyle{plain}
\newtheorem{mainthm}{Theorem}

\newtheorem{lem}[thm]{Lemma}

\theoremstyle{remark}

\theoremstyle{plain}
\newtheorem{Def}{Definition}

\newtheoremstyle{note}
{3pt}
{3pt}
{}
{}
{\itshape}
{:}
{.5em}
{}

\theoremstyle{note}

\newtheoremstyle{citing}
{3pt}
{3pt}
{\itshape}
{}
{\bfseries}
{.}
{.5em}
{\thmnote{#3}}

\theoremstyle{citing}

\newtheoremstyle{break}
{9pt}
{9pt}
{\itshape}
{}
{\bfseries}
{.}
{\newline}
{}

\theoremstyle{break}

\theoremstyle{exercise}

\swapnumbers
\theoremstyle{plain}

\let\lvert=|\let\rvert=|

\usepackage{babel}

\usepackage{babel}

\usepackage{babel}

\usepackage{babel}

\makeatother

\usepackage{babel}
\begin{document}
	
	\title{Isolated singularities of certain quasi-linear elliptic inequalities
		II}
	
	\author{Shiguang Ma and Shengyang Zang}
	\maketitle
	\begin{abstract}
		
		In this paper, we study the asymptotic behavior of isolated singularities for quasilinear elliptic equations of $p$-Laplacian type. We focus on the $p$-Laplace equation with the critical Serrin exponent $n(p-1)/(n-p)$ and the $n$-Laplace equation with critical exponential growth. We prove that any positive solution either admits a $C^{1,\alpha}$ extension to the origin or exhibits a precise singular asymptotic behavior. These results provide a complement to our previous study on $p$-Laplacian type inequalities.
	\end{abstract}
	
	\section{Introduction}
	The characterization of the behavior of isolated singularities for partial differential equations of the form
	\[
	-\Delta_{p}u=f(x,u,\nabla u)
	\]
	has attracted considerable attention over the past decades with extensive literature on this topic; see, e.g., \cite{Se1964,Se1965,Lio1980,BL1981,GS1981,Av1987,GV1988,CGS1989,BV1989,BP2001,SZ2002,PQS2007,PV2008} and the references therein. For the study of isolated singularities of $p$-superharmonic functions and applications of $p$-Laplace equations in conformal geometry, we refer the reader to \cite{LMQZ2023b,LMQZ2023a} and \cite{MQ2021,MQ2022} for the cases $1<p<n$ and $p=n$, respectively.
	A comprehensive review of this field is beyond the scope of this paper. Instead, it serves to complement our previous results and provide a full classification of isolated singularities in this context. Throughout the paper, $\Omega\subset\mathbb{R}^{n}$ denotes a domain. Unless otherwise specified, $B_{r}$ denotes the open ball in $\mathbb{R}^{n}$ of radius $r$ centered at the origin, and $B_r(x)$ denotes the ball of radius $r$ centered at $x$. In
	\cite{MZ2025}, the authors classified the isolated singularities
	of positive solutions of 
	\begin{equation}
		0\le-\Delta_{p}u\le u^{\frac{n(p-1)}{n-p}},\,\, x\in B_1\setminus\{0\}\subset\mathbb{R}^{n},\,\,1<p<n.\label{Ma-Zang 2025 inequality}
	\end{equation}
	and obtained the following theorem.
	\begin{letterthm}[{\cite[Theorem 1.2]{MZ2025}}]\label{thm:MZ_old} Let $u$ be a $C^{2}$ positive
		solution of (\ref{Ma-Zang 2025 inequality}). Then $\lim_{x\to0}u(x)/\underline{u}(|x|)=1$. Moreover,
		exactly one of
		the following holds:
		\begin{enumerate}
			\item $u$ admits a $C^{1,\alpha}$ extension to $0$ for some $\alpha>0;$
			\item 
			\[
			\lim_{x\to0}|x|^{\frac{n-p}{p-1}}u(x)=m\in(0,\infty);
			\]
			\item 
			\[
			\liminf_{x\to0}\left(\log\frac{1}{|x|}\right)^{\frac{n-p}{p(p-1)}}|x|^{\frac{n-p}{p-1}}u(x)\ge\left(\left(\frac{n-p}{p}\right)\left(\frac{n-p}{p-1}\right)^{p-1}\right)^{\frac{n-p}{p(p-1)}}.
			\]
		\end{enumerate}
	\end{letterthm}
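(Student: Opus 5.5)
Here $\underline{u}(r)=\min_{|x|=r}u(x)$. The plan has four stages: Harnack comparability on spheres, an ODE-type analysis of the radial quantity $\underline{u}$ (or of the spherical average) giving the trichotomy, then the upgrade from comparability to $\lim u/\underline{u}=1$, and finally the regularity in the bounded case.

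\textbf{Step 1: comparability on spheres.} Since $-\Delta_p u\ge 0$, $u$ is $p$-superharmonic. By the comparison principle with radial $p$-harmonic functions, $\underline{u}(r)$ satisfies a three-spheres type convexity in the variable $r^{-(n-p)/(p-1)}$. In particular,
\[
\underline{u}(r)\ge c\,r^{-\frac{n-p}{p-1}}-C \quad \text{or} \quad \underline{u}\ \text{is bounded}.
\]
Next write $-\Delta_p u=Vu^{p-1}$ with
\[
0\le V=\frac{-\Delta_p u}{u^{p-1}}\le u^{\frac{p(p-1)}{n-p}\cdot\frac{1}{p-1}}.
\]
The key scaling fact is that $V\in L^{n/p}$ on dyadic annuli $A_r=B_{2r}\setminus B_{r/2}$ with small norm as $r\to0$. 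This follows from $u\in L^{\frac{n(p-1)}{n-p}}$ weak-type near $0$ (a superharmonic function estimate) together with the upper bound on $u$ obtained via weak Harnack. Serrin's Harnack inequality then applies on annuli, giving $\sup_{A_r}u\le C\inf_{A_r}u$ uniformly.

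\textbf{Step 2: the trichotomy via an ODE.} Rescaling $u_r(y)=u(ry)$ on $B_2\setminus B_{1/2}$ gives
\[
-\Delta_p u_r\le r^{p}\,u_r^{\frac{n(p-1)}{n-p}},
\]
and the critical exponent makes $r^{p}u_r^{\frac{n(p-1)}{n-p}}=\bigl(r^{\frac{n-p}{p-1}}u\bigr)^{\frac{n(p-1)}{n-p}}$ scale-invariant. There are three cases.

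\emph{Bounded case.} If $\underline{u}$ is bounded, then $u$ is bounded by Step 1. The right-hand side is then bounded, so $u$ solves $-\Delta_p u=f\in L^\infty$ across the origin, since a bounded $p$-superharmonic point singularity is removable (capacity zero). $C^{1,\alpha}$ regularity (DiBenedetto/Tolksdorf) gives case 1.

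\emph{Unbounded case.} Set $m(r)=r^{\frac{n-p}{p-1}}\underline{u}(r)$ and $t=\log(1/r)$. Integrating the flux $\int_{\partial B_r}|\nabla u|^{p-2}\partial_\nu u$, which is monotone because $-\Delta_p u\ge0$, yields a differential inequality for $m$. The flux tends to a limit $\kappa\ge0$.
\begin{itemize}
\item If $\kappa>0$ and $m$ stays bounded, then a blow-up/Liouville argument (Kichenassamy–Véron classification of isolated singularities of $p$-harmonic functions, applied to the rescalings, which converge to $p$-harmonic limits because the right-hand side is small in the scale-invariant sense when $m\to$ const) gives $m(r)\to m\in(0,\infty)$. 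This is case 2.
\item Otherwise $m\to\infty$. In this case the right-hand side drives the flux, and comparison with the radial ODE
\[
-(r^{n-1}|w'|^{p-2}w')'=r^{n-1}w^{\frac{n(p-1)}{n-p}}
\]
in the variable $t$ gives $\frac{d}{dt}(\text{flux})\le m^{n(p-1)/(n-p)}$. Solving the resulting Bernoulli-type inequality $\frac{d}{dt}m^{-\frac{p(p-1)}{n-p}}\ge -c_*$ yields exactly
\[
m(r)\ge \bigl(c_*\,t\bigr)^{-\frac{n-p}{p(p-1)}}\cdot(\ldots),
\]
i.e. case 3 with the explicit constant $\bigl((\frac{n-p}{p})(\frac{n-p}{p-1})^{p-1}\bigr)^{\frac{n-p}{p(p-1)}}$. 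Tracking this sharp constant needs precise asymptotics of the radial solutions.
\end{itemize}

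\textbf{Step 3: from comparability to $\lim u/\underline{u}=1$.} Once the scale-invariant smallness holds (in cases 2 and 3 the quantity $(r^{\frac{n-p}{p-1}}u)^{\frac{n(p-1)}{n-p}}$ must be shown $o(1)$ in $L^{n/p}$ of annuli), the rescaled functions $u_r/\underline{u}(r)$ converge in $C^1_{loc}$ to a positive $p$-harmonic function on $\mathbb{R}^n\setminus\{0\}$ with minimum $1$ on the unit sphere. By Kichenassamy–Véron this limit is radial, hence identically $1$ on the sphere, which gives the ratio limit.

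\textbf{Main obstacle.} I expect the hardest part to be the smallness of $V$ in the critical $L^{n/p}$ norm on annuli, together with the sharp constant in case 3. Neither comes for free at the Serrin exponent. For the smallness I would first try a logarithmic integrability argument: test against $u^{-\epsilon}$-type functions to obtain $\int_{B_r}u^{\frac{n(p-1)}{n-p}}\,dx/|x|^{\dots}<\infty$, so that the tails vanish.
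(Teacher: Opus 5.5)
\textbf{The genuine gap is case 3}, which is the substantive part of the statement.

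First, its premise is wrong. Your Step 1 shows that $\underline u$ is concave in $s=r^{-\frac{n-p}{p-1}}$. Since it is positive, it is nondecreasing with nonincreasing slope, so $m(r)=r^{\frac{n-p}{p-1}}\underline u(r)$ converges to a finite $m_0\ge 0$. Thus $m$ never tends to $\infty$. Case 2 is $m_0>0$, and case 3 is the regime $m_0=0$, $\underline u\to\infty$. This also refutes your dichotomy ``$\underline u\ge c\,r^{-\frac{n-p}{p-1}}-C$ or $\underline u$ bounded''.

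Second, the Bernoulli step points the wrong way. With $\alpha=\frac{p(p-1)}{n-p}$, the inequality $\frac{d}{dt}m^{-\alpha}\ge -c_*$ integrates to a lower bound for $m^{-\alpha}$, i.e.\ an upper bound for $m$. Case 3 needs the reverse, $\frac{d}{dt}m^{-\alpha}\le C$ with the sharp $C$. Let $q=\frac{n(p-1)}{n-p}$, and define $\bar w$ from $\bar u$ as $\underline w$ is defined from $\underline u$ in Section 3. Then what is needed is the first-order slope bound $\bar w_t\ge -k\,\bar w^{\,q-p+2}$ for all large $t$ and every $k>\frac{1}{p-1}$. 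It integrates to $\bar w^{-(q-p+1)}\le\bigl(\frac{p}{n-p}+o(1)\bigr)t$, which is exactly the stated constant.

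Nothing in your argument produces such a bound. The flux carries only second-order information: in case 3 it tends to $0$, and its $t$-derivative is bounded in absolute value by $c\,(r^{\frac{n-p}{p-1}}\bar u)^{q}$. Turning this into a Bernoulli inequality for $m$ requires the flux to be asymptotic to $|\partial B_1|\bigl(\frac{n-p}{p-1}\bigr)^{p-1}m^{p-1}$, i.e.\ $w_t=o(w)$. Your blow-up does not give that. Its limits are $a+b|y|^{-\frac{n-p}{p-1}}$ with $a+b=1$, and for these the flux is only asymptotic to $b^{p-1}$ times that quantity. Proving $b=1$ is precisely where $-\Delta_pu\le u^{q}$ must be used dynamically.

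In the spherical-extrema framework the paper relies on, the natural route is through $\bar u$. At a maximum point on $\partial B_r$ the spherical Laplacian is nonpositive, so $\bar u$ is a viscosity subsolution of the radial equation $-\Delta_pv=v^{q}$. An entry-time and invariant-region argument in the $(\bar w,\bar w_t)$-plane then yields the slope bound; this is the mirror image of the argument carried out for $\underline w$ in Section 3.

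\textbf{A second, repairable problem is the smallness claim in Step 1}, on which your annular Harnack inequality and Step 3 rest. The potential bound should read $V\le u^{q-p+1}=u^{\frac{p(p-1)}{n-p}}$. More importantly, weak-$L^{q}$ integrability does not make $\int_{A_r}u^{q}$ small. In case 2, $u\approx m|x|^{-\frac{n-p}{p-1}}$ keeps $\int_{A_r}u^{q}$ bounded below by a positive constant.

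The right mechanism is pointwise. With $f=-\Delta_pu\le u^{q}$ one has $V^{n/p}=f\cdot f^{\frac{n-p}{p}}u^{-\frac{n(p-1)}{p}}\le f$, exactly at the Serrin exponent. Moreover $f\in L^1(B_{1/2})$, because $u$ extends to a $p$-superharmonic function on $B_1$ whose Riesz measure is Radon. Hence $\|V\|_{L^{n/p}(A_r)}\to0$.

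Even so, $L^{n/p}$ is the borderline for Serrin's Harnack inequality. You first need a Moser/Brezis--Kato bootstrap: show $u\in L^{s}$ for all $s$ on slightly smaller annuli, hence $V\in L^{s'}$ with $s'>n/p$. Only then does the uniform annular Harnack inequality follow.

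With that in place, your blow-up plus Liouville route to $u/\underline u\to1$ and to case 2 is a reasonable alternative to the spherical-extrema analysis. The third alternative of the trichotomy, however, remains unproved.
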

	
	A natural remaining question is to determine the precise asymptotic behavior  for the positive singular solution of the equation
	\begin{equation}
		-\Delta_{p}u=u^{\frac{n(p-1)}{n-p}},\,\, 1<p<n.\label{serrin exponent}
	\end{equation}
	Our first result is as follows.
	
	\begin{mainthm}\label{main thm1}
		
		Let $u\in C^{2}(B_1\setminus\{0\})$ be a positive solution
		of (\ref{serrin exponent}). Then either $u$ admits a $C^{1,\alpha}$
		extension to $\{0\}$ for some $\alpha>0$ or 
		\[
		\lim_{x\to0}\left(\log\frac{1}{|x|}\right)^{\frac{n-p}{p(p-1)}}|x|^{\frac{n-p}{p-1}}u(x)=\left(\left(\frac{n-p}{p}\right)\left(\frac{n-p}{p-1}\right)^{p-1}\right)^{\frac{n-p}{p(p-1)}}.
		\]
		
	\end{mainthm}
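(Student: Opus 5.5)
Since $u$ solves (\ref{serrin exponent}), it is in particular a positive solution of (\ref{Ma-Zang 2025 inequality}). Theorem \ref{thm:MZ_old} therefore applies, and it leaves three cases. In the first case $u$ has a $C^{1,\alpha}$ extension and we are done. The plan is then to rule out the second alternative and, in the third, to improve the $\liminf$ bound to a limit with the same constant.

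Throughout we use $\lim_{x\to0}u(x)/\underline{u}(|x|)=1$, where $\underline u$ is the spherical average (or minimum). This lets us reduce everything to a one-dimensional problem. Pass to the variable $t=\log\frac1{|x|}$ and set
\[
w(t)=|x|^{\frac{n-p}{p-1}}u(x).
\]
Up to errors that are $o(1)$ relative to the leading terms, (\ref{serrin exponent}) becomes an ODE of the form
\[
\big(\varphi(w')\big)'=-w^{\frac{n(p-1)}{n-p}}\,(1+o(1)),
\]
with a nonlinear first-order structure coming from the radial $p$-Laplacian in the Emden--Fowler variables. The Serrin exponent is exactly the one for which the critical-scaling term $e^{0\cdot t}$ disappears. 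Concretely, it is cleaner to integrate the equation over $B_r\setminus B_\epsilon$. By the divergence theorem,
\[
\int_{\partial B_r}|\nabla u|^{p-2}\partial_\nu u=\text{const}-\int_{B_r\setminus\{0\}}u^{\frac{n(p-1)}{n-p}},
\]
the flux being well defined because $u^{n(p-1)/(n-p)}\in L^1$ near $0$ (Serrin's theory).

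To rule out case 2, suppose $u\sim m|x|^{-\frac{n-p}{p-1}}$. Then $u^{n(p-1)/(n-p)}\sim m^{\frac{n(p-1)}{n-p}}|x|^{-n}$, which is not integrable near $0$. On the other hand, $-\Delta_p u\ge0$ with $u>0$ forces $-\Delta_p u$ to be a finite measure near $0$ by the standard $p$-superharmonic theory. This is a contradiction.

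Case 3 is the main step. Write $\Phi(r)=r^{n-1}|u'|^{p-1}$ for the averaged flux, with $u$ radial up to $1+o(1)$ and $C^1$-closeness of the average obtained via Harnack and gradient estimates on annuli. We have $\Phi(r)\to0$, since a positive limit would put us back in case 2. Moreover
\[
\Phi(r)=\int_0^r s^{n-1}u^{q}\,ds\,(1+o(1)),\qquad q=\frac{n(p-1)}{n-p}.
\]
Setting $u=r^{-\frac{n-p}{p-1}}w(t)$, the derivative satisfies $u'\approx-\frac{n-p}{p-1}r^{-\frac{n-p}{p-1}-1}w$ provided $w'/w\to0$. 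Hence $\Phi\approx(\frac{n-p}{p-1})^{p-1}w^{p-1}$, while $\int_0^r s^{n-1}u^q\,ds=\int_t^\infty w^q\,d\tau$. This yields the asymptotic ODE
\[
\Big(\tfrac{n-p}{p-1}\Big)^{p-1}w(t)^{p-1}=(1+o(1))\int_t^\infty w^q\,d\tau .
\]
Differentiating gives $(p-1)(\frac{n-p}{p-1})^{p-1}w^{p-2}w'=-w^q$. Integrating, $w^{-(q-p+1)}\sim\frac{q-p+1}{p-1}(\frac{p-1}{n-p})^{p-1}t$. Since $q-p+1=\frac{p(p-1)}{n-p}$, this gives exactly $w\,t^{\frac{n-p}{p(p-1)}}\to\big(\frac{n-p}{p}(\frac{n-p}{p-1})^{p-1}\big)^{\frac{n-p}{p(p-1)}}$.

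The obstacle is justifying $w'/w\to0$, i.e. slow variation, together with the $1+o(1)$ errors, rigorously rather than formally. To handle this, I would first use the $\liminf$ from case 3 and the decay $\Phi\to0$ to get $w(t)\to0$. I would then use a scaling argument: rescale $u$ on annuli $B_{2r}\setminus B_{r/2}$ by $r^{\frac{n-p}{p-1}}/w(\log\frac1r)$, so that the nonlinearity becomes a small perturbation. Harnack and $C^{1,\alpha}$ estimates then show the rescaled functions converge to the fundamental solution $|x|^{-\frac{n-p}{p-1}}$, which gives $r u'/u\to-\frac{n-p}{p-1}$ uniformly. Finally, integrating the first-order relation $(\Phi^{\frac{q}{p-1}})$-type ODE with $1+o(1)$ coefficients, a monotone quantity such as $\frac{d}{dt}w^{-(q-p+1)}\to\text{const}$, combined with L'Hôpital, yields the limit.
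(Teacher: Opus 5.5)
\textbf{There is a genuine gap: your proposal asserts, rather than proves, that the blow-up limit is the fundamental solution, and that is the heart of the theorem.}

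\textbf{What works.} Your exclusion of the second alternative is fine, and more direct than the paper's route through Phuc--Verbitsky and \cite{LMQZ2023b}. Since $\{0\}$ is polar for $p<n$, $u$ extends to a $p$-superharmonic function on $B_1$ whose Riesz measure is locally finite. Hence $u^{q}\in L^1$, with $q=\frac{n(p-1)}{n-p}$, which is incompatible with $u\sim m|x|^{-\eta}$ for $\eta=\frac{n-p}{p-1}$ and $m>0$. The same argument rules out a point mass, so $\Phi(r)=|S^{n-1}|^{-1}\int_{B_r}u^q$ holds exactly. Your formal ODE computation gives the right constant. Once $\Phi=(\eta w)^{p-1}(1+o(1))$ is known, integrating $\Phi'=-w^{q}(1+o(1))$ is legitimate.

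\textbf{Why the blow-up step fails as stated.} The rescaled functions $v_r(y)=u(ry)/\underline u(r)$ solve $-\Delta_p v_r=w^{q-p+1}v_r^{q}$. Harnack and $C^{1,\alpha}$ estimates give only subsequential $C^1_{loc}$ convergence to a positive $p$-harmonic function on $\mathbb{R}^n\setminus\{0\}$. That limit is radial because $u/\underline u\to1$, so it has the form $a|y|^{-\eta}+b$ with $a,b\ge0$ and $a+b=1$. Passing the flux identity to the limit gives $a^{p-1}=\lim\Phi(r)/(\eta w)^{p-1}$. So the claim that the limit is the fundamental solution is exactly the relation $\Phi\sim(\eta w)^{p-1}$ you then want to deduce from it; the argument is circular. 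Moreover, for a removable solution the very same estimates apply (there too $w\to0$), and the blow-up limit is the constant $1$. Any proof that $b=0$ must therefore use the fact that you are in the third alternative, and your sketch uses it nowhere in this step. Your derivation of $w\to0$ from $\Phi\to0$ is circular for the same reason, since $\Phi^{1/(p-1)}\approx\eta a w$; the paper instead takes $w\to0$ from \cite{MZ2025} and \cite{LMQZ2023b}.

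\textbf{What the paper does instead.} The missing ingredient is a global-in-$t$ dynamical argument, and the paper never needs $w'/w\to0$. It works with the spherical minimum $\underline w(t)=e^{-t}\underline u(r)$, $t=\eta\log\frac{\eta}{r}$. This function is only semi-concave and satisfies the one-sided inequality $\underline z'\le-\underline z-\underline w^{q}/\bigl((p-1)(\underline w+\underline z)^{p-2}\bigr)$, where $\underline z=\underline w_t^+$. The paper shows three things:
\begin{enumerate}
\item $\underline z<0$ eventually;
\item the trajectory must enter $\Theta_k^{\delta_0}$, since otherwise $d\underline z/d\underline w\ge\varepsilon_0/2$, contradicting $\underline z<0$ and $\underline z\to0$;
\item it never leaves $\Theta_k^{\delta_0}$.
\end{enumerate}
This gives $\underline w'<-k\underline w^{q-p+2}$ and hence the sharp upper bound. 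The matching lower bound is taken directly from the third alternative of Theorem \ref{thm:MZ_old}.

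\textbf{How your route could be repaired.} Use the blow-up only to control non-radiality. Then, with $w$ built from spherical averages and $t=\log\frac1r$, derive for $\rho=\Phi^{1/(p-1)}/(\eta w)$ the approximate equation $(\log\rho)'=\eta(1-\rho)-c\,w^{q-p+1}\rho^{1-p}(1+o(1))+o(1)$, with $c=\frac{1}{(p-1)\eta^{p-1}}$. From this:
\begin{enumerate}
\item $\rho$ cannot stay below $\frac12$ forever, since that would force exponential decay of $w$ and contradict the lower bound of the third alternative;
\item $\rho$ cannot cross $\frac12$ downward at large $t$;
\item hence $\rho\to1$.
\end{enumerate}
As written, however, the decisive step is not proved.
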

	
	This result establishes the $p$-Laplace analogue of \cite[Theorem A]{Av1987}. The corresponding result for radial solutions was previously obtained by Guedda and Veron \cite{GV1988}. In the classical case, the moving plane method in \cite{CGS1989} serves as a standard approach to deriving symmetry and asymptotic results. However, extending this technique to the $p$-Laplace equation \eqref{serrin exponent} is technically demanding due to the lack of linear structure and appropriate comparison principles. Adopting the strategy initiated in \cite{MZ2025}, we employ the Emden-Fowler transformation to convert the problem into an analysis of a dynamical system within the framework of viscosity theory, thereby circumventing the difficulties associated with the moving plane method in this context.

	Turning to the case $p=n$, a similar classification issue arises for the $n$-Laplace equation, where the fundamental solution behaves logarithmically. In \cite{MQ2021}, the authors studied the following equation
	\[
	-\Delta_{n}u=f(x,u,\nabla u)
	\]
	and obtained the following  result.
	\begin{letterthm}[{\cite[Theorem 4.2 and Remark 4.1]{MQ2021}}]\label{thm:MQ_old}
		Let $u\in C^{2}(B_2\setminus\{0\})$ be a nonnegative solution of 
		\begin{equation}
			-\Delta_{n}u=f(x,u,\nabla u)\label{n-Laplace equation}
		\end{equation}
		with 
		\[
		\lim_{x\to0}u(x)=\infty,
		\]
		where $f$ satisfies
		\begin{equation}
			0\le f(x,u,\nabla u)\le C|\nabla u|^{\beta}e^{\kappa u}\label{f condition}
		\end{equation}
		for $\beta\in (0,n)$ and $\kappa>0$. 
		Then 
		\[
		\lim_{x\to0}\frac{u(x)}{\log\frac{1}{|x|}}=m\ge0
		\]
		and 
		\[
		u(x)\ge m\log\frac{1}{|x|}
		\]
		for $x\in B_1\setminus\{0\}$.
	\end{letterthm}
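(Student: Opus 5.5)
The plan is to compare $u$ with the fundamental solution $\log\frac{1}{|x|}$ of the $n$-Laplacian, using the Emden--Fowler change of variables $t=\log\frac{1}{|x|}$, $\theta=x/|x|$. The proof has three steps: obtain a Harnack-type lower bound on $u$ near the origin; show that $\int_{B_1} f<\infty$, so that $-\Delta_n u$ is a finite measure; and then identify $u$ asymptotically with $m\log\frac{1}{|x|}$, where $m$ is determined by that mass.

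\textbf{Step 1 (lower bound and monotone quantities).} Since $-\Delta_n u=f\ge0$, $u$ is $n$-superharmonic. For $r<1$ let $\mu(r)=\min_{\partial B_r}u$. Comparing $u$ on the annulus $B_1\setminus B_r$ with the radial $n$-harmonic function equal to $\mu(r)$ on $\partial B_r$ and to $\min_{\partial B_1}u$ on $\partial B_1$ shows that
\[
\frac{\mu(r)-c}{\log\frac1r}
\]
is essentially nonincreasing as $r\to0$. Its limit, call it $m\ge0$, is the candidate slope. The same comparison, with $r\to0$ and the isolated point removable for bounded-below supersolutions, gives $u(x)\ge m\log\frac{1}{|x|}$ once normalized. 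Making this inequality exact, with no additive constant, may need $u>0$ on $\partial B_1$ and a small adjustment.

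\textbf{Step 2 (finite mass).} The key point is that the growth condition $f\le C|\nabla u|^{\beta}e^{\kappa u}$ with $\beta<n$ cannot produce infinite mass. The approach uses the Serrin--Kichenassamy--Veron description of positive $n$-superharmonic functions: $f\in L^1_{loc}$ near $0$ and
\[
-\Delta_n u=f+c\,\delta_0 .
\]
Serrin's $C^{1}$ rescaling estimates on dyadic annuli give $|\nabla u|\lesssim u/|x|$. The difficulty is the factor $e^{\kappa u}$, which could be as large as $|x|^{-\kappa m}$. Here one uses the identity
\[
\int_{\partial B_r}|\nabla u|^{n-2}\partial_\nu u=\text{flux}(r),
\]
which is monotone in $r$ and bounded by a quantity of order $m^{n-1}$ once Step 1 gives $u\approx m\log\frac{1}{|x|}$. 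Thus $f\in L^1$ and the flux converges to $c=\omega_{n-1}m^{n-1}$.

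\textbf{Step 3 (asymptotics).} Set
\[
v(t,\theta)=\frac{u(e^{-t}\theta)}{t}.
\]
Serrin's Harnack inequality on annuli gives $\max_{\partial B_r}u\le C\min_{\partial B_r}u+C$, and the $C^{1,\alpha}$ estimates applied to $u-\mu(r)$ on annuli show that the oscillation of $u$ on $\partial B_r$ is $o(\log\frac1r)$. This uses that the right-hand side has vanishing mass on small annuli, which follows from $f\in L^1$. Consequently $v\to m$ uniformly in $\theta$, that is,
\[
\lim_{x\to0}\frac{u(x)}{\log\frac{1}{|x|}}=m .
\]

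The main obstacle is the circularity between Step 2 and Step 3 in controlling $e^{\kappa u}$. To break it, first treat the case where $u$ is large only on a thin set, using Brezis--Merle type exponential integrability for $n$-Laplace equations with $L^1$ data. When $m$ is small, this gives $e^{\kappa u}\in L^q$ for some $q>1$, and a bootstrap closes the argument.
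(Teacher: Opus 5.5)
The paper gives no proof of this statement. It is quoted from \cite{MQ2021}, where it follows from Wolff-potential and $n$-capacity estimates for $n$-superharmonic functions. Judged on its own terms, your Step~1 is sound and needs no adjustment. Comparing $u$ on annuli with $a+b\log\frac{1}{|x|}$ shows that $s\mapsto\mu(e^{-s})$ is concave on $[0,\infty)$. Hence $(\mu(e^{-s})-\mu(1))/s$ decreases to some $m\ge0$, and $u(x)\ge\mu(|x|)\ge\mu(1)+m\log\frac{1}{|x|}\ge m\log\frac{1}{|x|}$ on $B_1\setminus\{0\}$. Step~2 is beside the point: finiteness of $-\Delta_n u$ near $0$ does not use the growth condition at all. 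Extending $u$ by $+\infty$ at the polar point $0$ gives an $n$-superharmonic function whose Riesz measure is Radon. Meanwhile, your gradient bound $|\nabla u|\lesssim u/|x|$ and the identification of the flux limit with $\omega_{n-1}m^{n-1}$ both presuppose two-sided asymptotics that you have not yet proved.

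The genuine gap is Step~3, which carries the whole content of the statement beyond Step~1, namely $\max_{\partial B_r}u\le(m+o(1))\log\frac1r$. Vanishing $L^1$ mass of $f$ on small annuli does not yield Serrin's Harnack inequality or $C^{1,\alpha}$ bounds. $L^1$ is exactly the borderline for $\Delta_n$, and those estimates need the rescaled right-hand side in some $L^{\gamma}$, $\gamma>1$, uniformly in $r$. After rescaling $B_{2r}\setminus B_{r/2}$ to unit size, the right-hand side is only bounded by $Cr^{n-\beta}|\nabla U|^{\beta}e^{\kappa U}$. Step~1 gives $e^{\kappa U}\ge c\,r^{-\kappa m}$, so for $\kappa m\ge n-\beta$ no uniform bound is available.

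Heuristically this is not just a technical loss. When $\kappa m>n-\beta$, consider bumps of mass $\epsilon_k\to0$ concentrated on balls of radius about $|x_k|^{\kappa m/(n-\beta)}$ around points $x_k\to0$. These are compatible with $f\le C|\nabla u|^{\beta}e^{\kappa u}$ and raise $u$ near $x_k$ by a multiple of $\epsilon_k^{1/(n-1)}\log\frac{1}{|x_k|}$, which can tend to infinity while remaining $o(\log\frac{1}{|x_k|})$. So $u-\mu(r)$ need not be bounded on annuli, and a uniform $C^{1,\alpha}$ estimate for it is false in general. A Harnack bound $\max\le C\min+C$ with fixed $C>1$ would not give the sharp slope $m$ anyway. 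Your closing Brezis--Merle bootstrap is, as you say, only for small $m$, so the large-$m$ case is left open.

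What is missing is a quantitative argument that a vanishing local mass $\int_{B_{|x|/2}(x)}f$ can only create an excess of size $o(\log\frac{1}{|x|})$ over $\min_{\partial B_{|x|}}u$, for instance through the Wolff-potential upper bound of Theorem~\ref{thm:KM94}. This is exactly where $\beta<n$ and the exponential bound must enter, and your proposal never uses them in an essential way.
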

	
	The case $m=0$ was not explicitly discussed in \cite{MQ2021}. Our second main result complements the previous theorem by addressing this borderline case.

	\begin{mainthm}\label{main thm 2}
		
		Let $w\in C^{2}(B_1\setminus\{0\})$ be a nonnegative solution
		of (\ref{n-Laplace equation}) with $f$ satisfying (\ref{f condition}).
		Then either $w$ admits a $C^{1,\alpha}$ extension to $\{0\}$ for some
		$0<\alpha<1$ or 
		\[
		\lim_{x\to0}\frac{w(x)}{\log\frac{1}{|x|}}=m>0.
		\]

	\end{mainthm}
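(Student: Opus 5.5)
Theorem \ref{thm:MQ_old} already handles solutions with $w\to\infty$ at the origin, and tells us the limit $m=\lim_{x\to0}w(x)/\log\frac{1}{|x|}$ exists and is nonnegative. So the plan splits into two cases. In the first, $w$ is bounded near $0$; we show it extends in $C^{1,\alpha}$. In the second, $w$ is unbounded; we show that the case $m=0$ in Theorem \ref{thm:MQ_old} cannot occur.

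\textbf{Step 1: $\liminf_{x\to0}w(x)<\infty$ forces $w$ to be bounded.} Since $-\Delta_n w\ge0$, $w$ is $n$-superharmonic in the punctured ball. A single point has zero $n$-capacity, so $w$ extends to an $n$-superharmonic function on $B_1$ with $w(0)=\liminf_{x\to 0}w(x)$. Suppose $w$ is unbounded but $\liminf w<\infty$. Combining the weak Harnack inequality with Harnack on annuli should rule this out and give $\lim_{x\to0}w=\infty$. Then we are in the situation of Theorem \ref{thm:MQ_old}. So either $w$ is bounded near $0$, or $\lim w=\infty$ and $m\ge0$ exists.

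\textbf{Step 2: the bounded case.} If $w\le M$ near $0$, then $f\le Ce^{\kappa M}|\nabla w|^{\beta}$ with $\beta<n$. First we check that $w$ solves the equation weakly across $0$ with no Dirac mass. Testing with $\eta_\varepsilon\varphi$, where $\eta_\varepsilon$ is a logarithmic cutoff, the boundary term is controlled by $\int|\nabla w|^{n-1}|\nabla\eta_\varepsilon|$. Caccioppoli on dyadic annuli for a bounded solution gives $\int_{B_{2r}\setminus B_r}|\nabla w|^n\le C$. With the log cutoff this boundary term tends to $0$. Hence $w\in W^{1,n}_{loc}(B_1)$ is a weak solution of $-\Delta_n w=g$ with $|g|\le C|\nabla w|^\beta$. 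Because $\beta<n$, this right-hand side has subcritical growth in the gradient. A standard bootstrap then applies: Gehring-type higher integrability, then $g\in L^{q}$ with $q>1$, then Morrey/Campanato-type estimates. Finally the $C^{1,\alpha}$ results of DiBenedetto, Tolksdorf and Lieberman give the $C^{1,\alpha}$ extension.

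\textbf{Step 3: excluding $m=0$ when $w\to\infty$.} This is the main obstacle. Assume $w\to\infty$ and $w=o(\log\frac1{|x|})$.

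First, growth bound. For every $\varepsilon>0$ we have $e^{\kappa w}\le |x|^{-\varepsilon}$ near $0$. Together with gradient estimates of the form $|\nabla w|\le C(1+w)/|x|$ (from rescaling and Harnack on annuli), this gives $f\le C|x|^{-\beta-\varepsilon}$ with $\beta+\varepsilon<n$. So $f\in L^1$ near $0$, and in fact $f\in L^q$ for some $q>1$ after choosing $\varepsilon$ small.

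Second, representation. Since $w$ is $n$-superharmonic, the Riesz measure $\mu=-\Delta_n w$ on $B_1$ equals $f\,dx+c\,\delta_0$ for some $c\ge0$. By Serrin/Kichenassamy--V\'eron, if $c>0$ then $w\sim c^{1/(n-1)}\gamma_n\log\frac1{|x|}$, which contradicts $m=0$. So $c=0$, and $w$ is a weak solution across $0$ of $-\Delta_n w=f$ with $f\in L^q$, $q>1$. Local regularity then makes $w$ bounded, contradicting $w\to\infty$. Hence $m>0$.

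The delicate points are two. One is the gradient estimate that makes $f$ integrable. The other is justifying the decomposition $\mu=f\,dx+c\delta_0$ with $f$ only known to be locally integrable, which requires the Kilpel\"ainen--Mal\'y potential estimates. I would handle this by comparing $w$ with $\epsilon\log\frac{1}{|x|}$ plus a solution of the Dirichlet problem with data $f$, using the comparison principle on annuli.
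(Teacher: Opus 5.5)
Your Step 3 is, in substance, the paper's proof run by contradiction. The paper proceeds in four steps:
\begin{itemize}
\item[(i)] it writes $-\Delta_n w=\tau\delta_0+g$ with $g\in L^1$, citing Bidaut-V\'eron;
\item[(ii)] it shows $m=0$ forces $\tau=0$ via the Kilpel\"ainen--Mal\'y lower bound;
\item[(iii)] it uses $w\le\varepsilon\log\frac{1}{|x|}$ to get $f\in L^\gamma$ for some $\gamma>1$;
\item[(iv)] it concludes with Serrin's local regularity and DiBenedetto/Tolksdorf.
\end{itemize}
Your route differs in two places. For (iii), the paper needs no pointwise gradient bound. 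It uses $e^{\kappa w}\in L^s_{loc}$ for every $s$ and $\nabla w\in L^q_{loc}$ for every $q<n$ (true for any nonnegative $n$-superharmonic function); H\"older then gives $|\nabla w|^\beta e^{\kappa w}\in L^\gamma$ because $\beta<n$. Your scaled estimate on annuli is also valid: after rescaling, the right-hand side is at most $Cr^{n-\beta-\varepsilon}|\nabla w_r|^\beta$. That uses $m=0$, not Harnack, and it is simply heavier. For (ii), you need neither Kichenassamy--V\'eron asymptotics nor a comparison argument. Once $w$ is extended across the $n$-polar point, $\mu=f\,dx+c\delta_0$ is automatic. Then $W^{\mu}_{1,n}(x,r)\ge c^{1/(n-1)}\log\frac{r}{|x|}$ together with the Kilpel\"ainen--Mal\'y lower bound gives $\liminf w/\log\frac{1}{|x|}>0$ whenever $c>0$.

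The real weak point is Step 1. ``Weak Harnack plus Harnack on annuli'' is circular here. On a unit annulus the rescaled right-hand side is $r^{n-\beta}|\nabla w_r|^\beta e^{\kappa w_r}$. So any Harnack or sup bound there needs an a priori bound on $e^{\kappa w}$ on that annulus, which is exactly what is unknown when $w$ is unbounded. The weak Harnack inequality, via $\inf_{B_r}w\le\liminf_{x\to0}w$, only gives uniform $L^s$ bounds near $0$, and these do not control $e^{\kappa w}$. A working argument needs an extra ingredient:
\begin{itemize}
\item[(a)] the Kilpel\"ainen--Mal\'y lower bound rules out an atom when $\liminf w<\infty$, so $\mu(B_r)\to0$;
\item[(b)] an exponential-integrability estimate for $n$-superharmonic functions whose Riesz measure has small mass (Brezis--Merle/John--Nirenberg type) then gives $e^{\kappa w}\in L^s$ near $0$ for all $s$;
\item[(c)] hence $f\in L^\gamma$ with $\gamma>1$, and $w$ is bounded.
\end{itemize}
The paper never treats this case; it only invokes Theorem~\ref{thm:MQ_old}, which presupposes $w\to\infty$. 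So this is an omission you share with the paper, but your sketch does not close it.

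Step 2 contains a slip and is unnecessary. A per-annulus bound $\int_{B_{2r}\setminus B_r}|\nabla w|^n\le C$ makes the log-cutoff boundary term only $O(1)$: there are about $\log\frac{1}{\epsilon}$ dyadic annuli, each contributing $O(1/\log\frac{1}{\epsilon})$. You need $\nabla w\in L^n$ near $0$. That follows by testing with $w\eta_\epsilon^n$ and absorbing, using $\|\nabla\eta_\epsilon\|_{L^n}\to0$. More simply, the paper's mechanism, stated directly rather than by contradiction, uses only $w=o(\log\frac{1}{|x|})$. It therefore already covers bounded $w$ and yields the $C^{1,\alpha}$ extension without a separate removability argument.
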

	
	Combined with Theorem \ref{thm:MQ_old}, this result completes the description of the asymptotic behavior of solutions near the isolated singularity, covering both the removable singularity case ($m=0$) and the singular case ($m>0$). This fully generalizes Taliaferro's classification \cite[Theorem 2.1]{Ta2006} to the $n$-Laplace setting. The proof relies on potential theoretic arguments; specifically, we apply the pointwise potential estimates of Kilpel\"ainen and Mal\'y \cite{KM1994} and the local regularity theory of Serrin \cite{Se1964} to establish the removability of the singularity.
	
	\noindent\textbf{Note added.} The present manuscript was submitted for publication on May 8, 2026, before being posted on arXiv. While preparing this arXiv version, we became aware of the recent preprint by Xu and Yang \cite{XY2026}, posted on October 16, 2026, which establishes the same isolated-singularity classification for $1<p<n$ in the class of nonnegative weak solutions, whereas Theorem \ref{main thm1} is stated for positive $C^2$ solutions. Their proof builds on the spherical-extrema approach introduced in our earlier work \cite{MZ2025}, with an additional BV argument to handle weak solutions. The present paper also treats the case $p=n$ in Theorem \ref{main thm 2}, which is not covered in \cite{XY2026}.
	
	The rest of the paper is organized as follows. In Section 2, we collect some basic definitions and preliminary results. We then prove Theorem \ref{main thm1} and Theorem \ref{main thm 2} in Section 3 and Section 4 respectively.
	
	\section{Preliminaries}
	
	In this section, we collect several definitions and basic results that will be used throughout the paper.
	\begin{Def}[{\cite[Definition 2.5]{Li2017}}] Let $\Omega$ be a domain
		in $\mathbb{R}^{n}.$ We say that $u\in W_{loc}^{1,p}(\Omega)$ is
		a weak solution to 
		\[
		-\Delta_{p}u=0,
		\]
		if for each $\phi\in C_{0}^{\infty}(\Omega),$ there holds 
		\[
		\int_{\Omega}\langle|\nabla u|^{p-2}\nabla u,\nabla\phi\rangle dx=0.
		\]
		If in addition, $u$ is continuous, we say $u$ is $p$-harmonic.
	\end{Def}
	
	\begin{Def}[{\cite[Definition 5.1]{Li2017}}] A function $u:\Omega\to(-\infty, \infty]$
		is called $p$-superharmonic in $\Omega$, if 
		\begin{itemize}
			\item $u$ is lower semi-continuous in $\Omega;$
			\item $u\not\equiv\infty$ in $\Omega$;
			\item for each domain $D\Subset\Omega$ the comparison principle holds:
			if $h\in C(\bar{D})$ is $p$-harmonic in $D$ and $h|_{\partial D}\le u|_{\partial D},$
			then $h\le u$ in $D$. 
		\end{itemize}
	\end{Def}
	
	One crucial estimate for nonnegative $p$-superharmonic functions
	is the following theorem due to Kilpel\"ainen and Mal\'y \cite{KM1994}.
	
	\begin{thm}[{\cite[Theorem 1.6]{KM1994}}]\label{thm:KM94}
		Let $1<p\le n$. Suppose $u$
		is a nonnegative $p$-superharmonic function satisfying 
		\[
		-\Delta_{p}u=\mu,\,\, x\in B_{3r}(x_{0}).
		\]
		Then there exists $C_{1}(n,p),C_{2}(n,p)>0$ such that 
		\[
		C_{1}(n,p)W_{1,p}^{\mu}(x_{0},r)\le u(x_{0})\le C_{2}(n,p)\left(\inf_{B_{r}(x_{0})}u+W_{1,p}^{\mu}(x_{0},2r)\right).
		\]
		
	\end{thm}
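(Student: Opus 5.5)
The statement is the Kilpel\"ainen--Mal\'y pointwise Wolff potential estimate. Here $W_{1,p}^{\mu}(x_0,r)=\int_0^r\bigl(\mu(B_t(x_0))/t^{n-p}\bigr)^{1/(p-1)}\,dt/t$. I will prove the two inequalities separately, comparing $u$ on dyadic balls $B_j=B_{r_j}(x_0)$ with $r_j=2^{-j}r$.

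\emph{Lower bound.} The plan is to compare $u$ with $p$-potentials of pieces of $\mu$. Set $m_j=\inf_{B_j}u$. The first step is a one-step estimate
\[
m_{j+1}-m_j\ \ge\ c\Bigl(\frac{\mu(B_{j+1})}{r_j^{\,n-p}}\Bigr)^{1/(p-1)}.
\]
Replace $u$ on $B_j$ by the solution $v$ of $-\Delta_p v=\mu\llcorner B_{j+1}$ with $v=m_j$ on $\partial B_j$; the comparison principle gives $u\ge v$ in $B_j$. Testing with truncations of $v-m_j$ then yields a weak Harnack inequality, so that on the inner ball $v-m_j$ is at least $c\,(\mu(B_{j+1})r_j^{p-n})^{1/(p-1)}$. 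This uses the capacity of $B_{j+1}$ relative to $B_j$, which is $\sim r_j^{n-p}$ and, for $p=n$, of order one. Summing over $j$ and using lower semicontinuity together with $u(x_0)=\lim_j m_j$ (superharmonic functions satisfy $u(x_0)=\liminf$) gives $u(x_0)\ge c\sum_j(\mu(B_{j+1})r_j^{p-n})^{1/(p-1)}\ge C_1W_{1,p}^{\mu}(x_0,r)$. Nonnegativity of $u$ handles $m_0\ge0$.

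\emph{Upper bound.} Define $a_j=\inf_{B_j}u$, with $a_0=\inf_{B_r}u$. I aim for the recursive inequality
\[
\sup_{B_{j+1}}u-a_j\ \le\ \varepsilon\,(\text{previous oscillation term})+C\Bigl(\frac{\mu(B_{j-1})}{r_j^{\,n-p}}\Bigr)^{1/(p-1)}.
\]
This is obtained via De Giorgi/Moser iteration applied to $(u-a_j)_+$ on $B_j$: a Caccioppoli inequality with the measure term, the weak Harnack inequality for supersolutions bounding $\fint (u-a_j)^{\gamma}$ for small $\gamma<n(p-1)/(n-p)$, and the measure data estimated by $\mu(B_{j-1})$. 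Iterating this, the $\varepsilon$-terms telescope geometrically. Summing gives
\[
\limsup_{x\to x_0}u\le C\Bigl(a_0+\sum_j(\mu(B_{j-1})r_j^{p-n})^{1/(p-1)}\Bigr)\le C_2\bigl(\inf_{B_r}u+W_{1,p}^{\mu}(x_0,2r)\bigr),
\]
and $u(x_0)\le\limsup$ finishes the argument. For $p=n$ the same scheme works, since all scalings are dimensionally consistent with $r^{n-p}=1$.

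\emph{Obstacle.} The main difficulty is the upper bound's one-step estimate with constants independent of $j$. The measure $\mu$ can be singular, so $u$ need not lie in $W^{1,p}$. I would therefore work with truncations $\min(u,k)$, which are supersolutions in $W^{1,p}_{loc}$ with Riesz measure controlled by $\mu$, prove the estimates for these, and then let $k\to\infty$. The delicate point is the De Giorgi step with measure data, handled by testing with $\bigl(1-(1+(u-a_j)_+/\delta)^{-\theta}\bigr)\eta^p$ for an optimal choice of $\delta$ comparable to the Wolff term.
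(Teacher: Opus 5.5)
The paper gives no proof of this statement; it quotes it from Kilpel\"ainen--Mal\'y, so your proposal has to be judged against their argument. Your upper bound has a genuine gap. You iterate $\sup_{B_{j+1}}u-a_j$ and conclude a bound on $\limsup_{x\to x_0}u$. But a nonnegative $p$-superharmonic function with finite Wolff potential at $x_0$ can be unbounded in every neighbourhood of $x_0$.

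For a concrete example, let $p\le n$ and $\mu=\sum_{k\ge1}c_k\delta_{y_k}$ with $|y_k-x_0|=2^{-k}r$ and $c_k=(2^{-k}r)^{n-p+1}$. Let $u\ge0$ be $p$-superharmonic in $B_{3r}(x_0)$ with Riesz measure $\mu$. Then $\mu(B_t(x_0))\lesssim t^{\,n-p+1}$, so $W^{\mu}_{1,p}(x_0,r)<\infty$. On the other hand, the lower Wolff bound at each $y_k$ gives $u(y_k)=+\infty$, since $\mu(\{y_k\})>0$ and $p\le n$ make $W^{\mu}_{1,p}(y_k,\rho)=\infty$. Hence every quantity $\sup_{B_{j+1}}u-a_j$ in your recursion is $+\infty$. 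The asserted inequality $\limsup_{x\to x_0}u\le C\bigl(a_0+\sum_j(\mu(B_{j-1})r_j^{p-n})^{1/(p-1)}\bigr)$ is therefore false.

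A sharper De Giorgi--Moser step cannot fix this. $u$ is only a supersolution, and an upper bound for $(u-a_j)_+$ needs smallness of $\mu$ near \emph{every} point of the ball (a uniform Morrey- or Wolff-type condition), not just the total mass $\mu(B_{j-1})$. Truncating to $\min(u,k)$ does not help either, because the bounds must be uniform in $k$.

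The missing idea is to reach $u(x_0)$ through $u(x_0)=\lim_{\rho\to0}\operatorname{ess\,inf}_{B_\rho(x_0)}u$, and to iterate on levels instead of suprema. Put $a_{j+1}=a_j+\delta_j$, with $\delta_j$ fixed by a smallness condition on the (weighted) size of $\{u>a_j+\delta_j\}\cap B_{j+1}$. Then $\{u\le a_{j+1}\}$ has positive density in $B_{j+1}$, and therefore $\operatorname{ess\,inf}_{B_{j+1}}u\le a_{j+1}$. Your bounded test function $\bigl(1-(1+(u-a_j)_+/\delta_j)^{-\theta}\bigr)\eta^p$ is of the right Kilpel\"ainen--Mal\'y type, precisely because it keeps the measure term at most $\mu(B_j)$ however singular $\mu$ is. Combined with Sobolev's inequality and the weak Harnack inequality, it gives a recursion of the form $\delta_j\le\frac12\delta_{j-1}+C\bigl(\mu(B_{j-1})r_j^{p-n}\bigr)^{1/(p-1)}$. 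Summing yields $u(x_0)\le\lim_ja_j\le C_2\bigl(\inf_{B_r}u+W^{\mu}_{1,p}(x_0,2r)\bigr)$. In short, that test function has to feed a level-set (density) estimate, not a sup bound.

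Your lower bound is the standard argument and goes through after two small repairs.

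First, the comparison $u\ge v$ with a solution of $-\Delta_pv=\mu$ restricted to $B_{j+1}$ is not available for general measure data; comparison, and even uniqueness, is open in general when $\mu$ charges sets of zero $p$-capacity. You do not need it. The estimate $\mu(B_\rho)\le C\rho^{n-p}\bigl(\inf_{B_\rho}w\bigr)^{p-1}$, for a nonnegative $p$-superharmonic $w$ in the concentric ball of radius $2\rho$, applies directly to $w=u-m_j$ on $B_j$. To prove it, test with $\eta^p$, then bound $\int|\nabla w|^{p-1}\eta^{p-1}$ using the Caccioppoli inequality obtained by testing with $w^{-\epsilon}\eta^p$, together with the weak Harnack inequality.

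Second, start the chain at $B_{2r}$, which is legitimate since $u\ge0$ in $B_{3r}$. As written, your sum involves only $\mu(B_{j+1})$ for $j\ge0$. It therefore misses the term $(\mu(B_r)r^{p-n})^{1/(p-1)}$ coming from the shell $r/2<t<r$, and only controls $W^{\mu}_{1,p}(x_0,r/2)$.
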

	
	\section{Proof of Theorem \ref{main thm1}}
	
	The following lemma is a generalization of Lions \cite{Lio1980},  which is the $p=2$ case. 
	\begin{lem}\label{u L sigma}
		
		Let $u\ge0$ be a solution of
		\begin{equation}
			-\Delta_{p}u=u^{\sigma}\quad \text{in } B_1\setminus\{0\} \label{p Laplacian = u sigma}
		\end{equation}
		with $1<p<n$. If $\sigma\ge\frac{n(p-1)}{n-p}$, then $u^{\sigma}\in L^{1}(B_1)$
		and \eqref{p Laplacian = u sigma} holds in $B_1$ in the sense of distributions.
	\end{lem}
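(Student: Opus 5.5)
We follow Lions' argument and test the equation against cut-offs that vanish near the origin. We then use the fact that a point has zero $p$-capacity when $p<n$.

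Fix $\eta\in C_0^\infty(B_1)$ with $0\le\eta\le1$ and $\eta\equiv1$ on $B_{1/2}$. For small $\epsilon>0$ let $\psi_\epsilon(x)=\psi(|x|/\epsilon)$, where $\psi\in C^\infty(\mathbb{R})$, $\psi=0$ on $[0,1]$, $\psi=1$ on $[2,\infty)$, and $|\psi'|\le 2$.

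\textbf{Step 1: $u^\sigma\in L^1(B_1)$.} Test the equation with $\phi=(\eta\psi_\epsilon)^{q}$, where $q$ is large; this is a legitimate test function because it is supported away from $0$. This gives
\[
\int u^\sigma (\eta\psi_\epsilon)^q = q\int (\eta\psi_\epsilon)^{q-1}|\nabla u|^{p-2}\nabla u\cdot\nabla(\eta\psi_\epsilon).
\]
To avoid needing gradient bounds on $u$, integrate by parts once more to move derivatives onto the test function. Alternatively, first derive the standard estimate
\[
\int|\nabla u|^{p}u^{-1-\delta}\phi\le C\int u^{p-1-\delta}|\nabla\phi|^p/\phi^{p-1}+\dots
\]
by testing with $u^{-\delta}\phi$ (a Serrin-type trick). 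A cleaner route is to use that $u$ is $p$-superharmonic in $B_1\setminus\{0\}$ and positive, and hence extends as a $p$-superharmonic function on $B_1$, since $\{0\}$ is polar for $p<n$. By the Riesz-measure theory for $p$-superharmonic functions, $-\Delta_p u=\mu$ in $B_1$ for a nonnegative Radon measure $\mu$, with $|\nabla u|^{p-1}\in L^1_{loc}(B_1)$ and $u\in L^{s}_{loc}$ for $s<n(p-1)/(n-p)$. Then
\[
\mu=u^\sigma\,dx+c\,\delta_0 \quad\text{with } c\ge0,
\]
because $\mu$ restricted to $B_1\setminus\{0\}$ is $u^\sigma dx$. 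In particular $\int_{B_{1/2}}u^\sigma\le\mu(B_{1/2})<\infty$ by monotone convergence. This settles the integrability.

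\textbf{Step 2: $c=0$.} This is the main obstacle, and it is where $\sigma\ge n(p-1)/(n-p)$ enters. Suppose $c>0$. By Theorem~\ref{thm:KM94}, with $x_0$ near $0$ and $r=|x_0|/4$ (so that $B_{3r}(x_0)$ avoids the origin), we get only the pointwise bound through the $u^\sigma$ part. Instead, apply the lower Wolff bound at points $x_0$ and use balls containing the origin. Since $u$ extends as $p$-superharmonic with $-\Delta_pu\ge c\delta_0$, comparison with the fundamental solution on $B_{1/2}$ gives
\[
u(x)\ge c'|x|^{-\frac{n-p}{p-1}}-C
\]
near $0$. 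This is the same conclusion as Theorem~\ref{thm:KM94} applied with $x_0=0$, where the Wolff potential of $c\delta_0$ is $\approx c^{1/(p-1)}|x|^{-(n-p)/(p-1)}$. Then
\[
u^\sigma\ge c''|x|^{-\sigma\frac{n-p}{p-1}}\ge c''|x|^{-n},
\]
which is not integrable near $0$. This contradicts Step 1, so $c=0$.

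Hence $-\Delta_pu=u^\sigma$ in $\mathcal{D}'(B_1)$, i.e.\ $\int|\nabla u|^{p-2}\nabla u\cdot\nabla\phi=\int u^\sigma\phi$ for all $\phi\in C_0^\infty(B_1)$, using $|\nabla u|^{p-1}\in L^1_{loc}$.

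The delicate points are two. The first is justifying the extension of $u$ across $0$ as a $p$-superharmonic function. For this we set $u(0)=\liminf_{x\to0}u$ and use the removability of sets of zero $p$-capacity for bounded-below superharmonic functions, as in Heinonen--Kilpel\"ainen--Martio. The second is the identification of the singular part of $\mu$ as a multiple of $\delta_0$, which holds because the singular part is supported at $\{0\}$.
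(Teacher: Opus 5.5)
Your argument is correct, but it takes a different route from the paper. The paper proves the lemma in one line by citing the capacitary removability criterion of Phuc--Verbitsky (their Theorem 2.18): the equation extends across $\{0\}$ if and only if $\mathrm{Cap}_{\mathbf{G}_p,\sigma/(\sigma+1-p)}(\{0\})=0$. A point has zero Bessel capacity exactly when $p\sigma/(\sigma+1-p)\le n$, i.e.\ $\sigma\ge n(p-1)/(n-p)$.

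You instead run the Lions/Brezis--Lions mechanism inside $p$-superharmonic potential theory:
\begin{itemize}
\item extend $u$ across the polar set $\{0\}$ (Heinonen--Kilpel\"ainen--Martio);
\item take its Riesz measure and write $\mu=u^\sigma\,dx+c\,\delta_0$;
\item exclude $c>0$ with the lower Wolff bound.
\end{itemize}
The identification of $\mu$ already gives $u^\sigma\in L^1$ near $0$ for \emph{every} $\sigma$, independently of the threshold. The threshold then enters only as the non-integrability of $|x|^{-\sigma(n-p)/(p-1)}$.

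The paper's citation is shorter and is a genuine if-and-only-if criterion. Yours is self-contained modulo standard facts and shows why the exponent is sharp. It is also the same Dirac-mass-versus-Wolff-potential argument the paper itself uses in Section 4 to show $\tau=0$.

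Three points to tighten:
\begin{itemize}
\item Drop the abandoned cut-off computations at the start of Step 1.
\item In Step 2, do not lean on ``comparison with the fundamental solution''. Comparison for measure data is not available off the shelf: comparing on $B_{1/2}\setminus\{0\}$ would need exactly the boundary behaviour at $0$ you are trying to prove.
\item Do not apply Theorem~\ref{thm:KM94} ``at $x_0=0$''; that only gives $u(0)=\infty$. Apply it at $x_0\neq0$ with a fixed radius, say $r=1/8$ and $|x_0|<1/16$, so that $B_{3r}(x_0)\subset B_1$. Since $\mu(B_t(x_0))\ge c$ whenever $t>|x_0|$, this gives
\[
u(x_0)\ge C_1\,c^{\frac{1}{p-1}}\int_{|x_0|}^{1/8}t^{-\frac{n-p}{p-1}}\,\frac{dt}{t}\ge C\,c^{\frac{1}{p-1}}|x_0|^{-\frac{n-p}{p-1}}-C',
\]
which is exactly the lower bound your contradiction needs.
\end{itemize}
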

	\begin{proof}
		This follows directly from Phuc and Verbitsky \cite[Theorem 2.18]{PV2008}, which implies that \eqref{p Laplacian = u sigma} holds in $B_1$ if and only if
		\begin{equation*}
			\text{Cap}_{\mathbf{G}_{p},\frac{\sigma}{\sigma+1-p}}(\{0\})=0.
		\end{equation*}
		Noting that a point has zero Bessel capacity $\text{Cap}_{\mathbf{G}_\alpha, s}$ if and only if $\alpha s \le n$, this condition is equivalent to
		\[
		\frac{p\sigma}{\sigma+1-p} \le n, \quad \text{i.e.,} \quad \sigma\ge\frac{n(p-1)}{n-p}.
		\]
		Here, the Bessel capacity is defined for any set $E$  by
		\[
		\text{Cap}_{\mathbf{G}_{\alpha},s}(E) := \inf\big\{ \|f\|^s_{L^s(\mathbb{R}^n)}: \mathbf{G}_\alpha f\ge \chi_E, \, f\ge 0 \big\}.
		\]

	\end{proof}
	
	\begin{lem}Under the assumption of Theorem \ref{main thm1}, we have
		\[
		\lim_{x\to0}\frac{u(x)}{|x|^{\frac{p-n}{p-1}}}=0.
		\]
		
	\end{lem}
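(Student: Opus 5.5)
Since $\sigma=\frac{n(p-1)}{n-p}$, Lemma \ref{u L sigma} shows that $u^{\sigma}\in L^{1}(B_1)$ and that $-\Delta_{p}u=\mu:=u^{\sigma}\,dx$ in $B_1$ in the sense of distributions. The key consequence is that $\mu$ carries no Dirac mass at the origin, $\mu(\{0\})=0$. Heuristically this rules out the fundamental-solution behavior $m|x|^{\frac{p-n}{p-1}}$. To make this quantitative I will use the upper bound in Theorem \ref{thm:KM94}, applied at points $x_0$ close to the origin on small balls of radius comparable to $|x_0|$, rather than on balls centered at $0$.

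Fix $x_0$ with $|x_0|=2r$ small. First, $u$ is $p$-superharmonic in $B_1$: it is a nonnegative solution of a measure-data problem, and since $\mu\in L^1$ we may also use that a $C^2$ positive supersolution in the punctured ball extends as $p$-superharmonic across a point. Apply Theorem \ref{thm:KM94} on $B_{3r}(x_0)\subset B_{5r}$, with the radius in the theorem shrunk by a fixed factor if needed so that the ball stays inside $B_1$. This gives
\[
u(x_0)\le C_2\Big(\inf_{B_r(x_0)}u+W^{\mu}_{1,p}(x_0,2r)\Big),\qquad W^{\mu}_{1,p}(x_0,2r)=\int_0^{2r}\Big(\frac{\mu(B_t(x_0))}{t^{n-p}}\Big)^{\frac1{p-1}}\frac{dt}{t}.
\]
Since $\mu(B_t(x_0))\le\mu(B_{4r})=:\varepsilon(r)$ and $\varepsilon(r)\to0$ as $r\to0$ by absolute continuity, we get $W^{\mu}_{1,p}(x_0,2r)\le C\,\varepsilon(r)^{\frac1{p-1}}r^{\frac{p-n}{p-1}}$. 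So the Wolff term is $o(r^{\frac{p-n}{p-1}})$.

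It remains to show $\inf_{B_r(x_0)}u=o(r^{\frac{p-n}{p-1}})$. I would argue by contradiction: suppose $u\ge c\,r^{\frac{p-n}{p-1}}$ on $B_r(x_0)$ along a sequence $x_0\to0$. Then
\[
\mu(B_r(x_0))\ge c^{\sigma}r^{\frac{(p-n)\sigma}{p-1}}r^{n}=c^{\sigma}r^{-n}r^{n}=c^{\sigma},
\]
because $\frac{(p-n)\sigma}{p-1}=-n$ exactly for the Serrin exponent. This contradicts $\mu(B_{4r})\to0$. This scaling is where criticality of the exponent is used, and it is the step I expect to be the main point to verify carefully; the inequality itself is elementary. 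Combining the two estimates, $u(x_0)|x_0|^{\frac{n-p}{p-1}}\le C(\varepsilon(r)^{1/\sigma}+\varepsilon(r)^{\frac1{p-1}})\to0$, which proves the lemma.

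The only genuinely delicate technical point is justifying that $u$ is $p$-superharmonic on a neighborhood of the balls used, so that Theorem \ref{thm:KM94} applies. Because the balls $B_{3r}(x_0)$ may contain the origin, I would avoid this issue by choosing radius $r/2$ instead, so that $B_{3r/2}(x_0)\subset B_1\setminus\{0\}$. There $u$ is a classical $C^2$ supersolution, hence $p$-superharmonic, and the same scaling argument goes through with adjusted constants. This way only the $L^1$ integrability from Lemma \ref{u L sigma} is needed, not the distributional equation across $0$.
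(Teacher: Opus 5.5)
Your argument breaks down at the Wolff potential estimate. With only the bound $\mu(B_t(x_0))\le\varepsilon(r)$ you get
\[
W^{\mu}_{1,p}(x_0,r)\le\varepsilon(r)^{\frac{1}{p-1}}\int_0^{r}t^{-\frac{n-p}{p-1}}\,\frac{dt}{t}=\infty,
\]
because $\frac{n-p}{p-1}>0$. The quantity $C\varepsilon^{\frac1{p-1}}r^{\frac{p-n}{p-1}}$ that you wrote is what this bound gives for $\int_r^\infty$, not for $\int_0^r$. The small scales $t\ll r$ are the real difficulty. For them to contribute $o(r^{\frac{p-n}{p-1}})$, you need a decay $\mu(B_t(x_0))\lesssim (t/r)^{a}$ with $a>n-p$, uniformly in $x_0$ with $|x_0|=2r$. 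The natural source of such decay is a bound $\sup_{B_r(x_0)}u\le M r^{\frac{p-n}{p-1}}$, which is the $O$-version of the statement you are proving, so the argument is circular.

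The integrability $u^\sigma\in L^1$ cannot supply this bound, since an $L^1$ density can have infinite Wolff potential at points. The equation does not give it cheaply either. Writing $-\Delta_p u=Vu^{p-1}$ with $V=u^{\sigma-p+1}$, you only know $V\in L^{n/p}$. That is the borderline space where Serrin's local boundedness and Harnack estimates do not apply. So your sentence ``only the $L^1$ integrability from Lemma \ref{u L sigma} is needed'' marks exactly where the proof fails. Your estimate of $\inf_{B_r(x_0)}u$ via $n/\sigma=\frac{n-p}{p-1}$ is correct.

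The paper avoids this by importing a priori information. By \cite[Theorem 1.2]{MZ2025}, $|x|^{\frac{n-p}{p-1}}u(x)\to m\in[0,\infty)$. Then $\mu(\{0\})=0$ together with \cite[Theorem 4]{LMQZ2023b} forces $m=0$. Your approach can be repaired with the same input. Suppose $u(x)\le M|x|^{\frac{p-n}{p-1}}$ near $0$. Then for $t\le r$,
\[
\mu(B_t(x_0))\le\min\{\varepsilon(r),\,\omega_nM^{\sigma}(t/r)^{n}\}.
\]
Splitting the Wolff integral at $t_*=r\,(\varepsilon(r)/(\omega_nM^{\sigma}))^{1/n}$ gives
\[
W^{\mu}_{1,p}(x_0,r)\le C\,M\,\varepsilon(r)^{\frac{p}{n(p-1)}}\,r^{\frac{p-n}{p-1}},
\]
and hence $|x_0|^{\frac{n-p}{p-1}}u(x_0)\le C\big(\varepsilon(r)^{1/\sigma}+M\varepsilon(r)^{\frac{p}{n(p-1)}}\big)\to0$. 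This would replace the appeal to \cite{LMQZ2023b} with a direct Kilpel\"ainen--Mal\'y argument. However, some a priori upper bound of order $|x|^{\frac{p-n}{p-1}}$ is indispensable, and your proposal does not have one.
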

	
	\begin{proof}
		
		By  \cite[Theorem 1.2]{MZ2025}, there exists a constant $m\ge0$ such that 
		\[
		\lim_{x\to0}\frac{u(x)}{|x|^{\frac{p-n}{p-1}}}=m.
		\]
		Moreover, Lemma \ref{u L sigma} implies that
		$u \in L^{\frac{n(p-1)}{n-p}}(B_1),$
		and therefore $-\Delta_p u \in L^1_{\mathrm{loc}}(B_1)$.
		As a consequence, the Radon measure $-\Delta_p u$ has no mass concentration at the origin.
		It then follows from \cite[Theorem~4]{LMQZ2023b} that $m=0$.
	\end{proof}
	
	Then we follow the framework of \cite{MZ2025}. For $r>0$, we let
	\[
	\underline{u}(r)=\inf_{\partial B_{r}}u(x),\,\,\bar{u}(r)=\sup_{\partial B_{r}}u(x).
	\]
	
	\begin{Def}
		Let $f: (\alpha,\beta) \to \mathbb{R}$ be a function.
		\begin{enumerate}
			\item $f$ is called \textbf{concave} if for any $r_{1},r_{2}\in(\alpha,\beta)$ and $\lambda\in(0,1)$,
			\[
			f(\lambda r_{1}+(1-\lambda)r_{2})\ge\lambda f(r_{1})+(1-\lambda)f(r_{2});
			\]
			
			\item $f$ is called \textbf{semi-concave} if there exists a constant $C>0$ such that the function
			\[
			r \mapsto f(r)-Cr^{2}
			\]
			is concave in $(\alpha,\beta)$;
			
			\item $f$ is called \textbf{locally semi-concave} if for any $[\alpha_{1},\beta_{1}]\subset(\alpha,\beta)$, $f$ is semi-concave in $(\alpha_{1},\beta_{1})$.
		\end{enumerate}
	\end{Def}
	
	We recall the following properties of semi-concave functions.

	\begin{lem}\label{properties of semi concave} 
		Let $f: (\alpha,\beta) \to \mathbb{R}$ be a locally semi-concave function.
		\begin{enumerate}
			\item $f$ is continuous, and the one-sided derivatives $f_{r}^{\pm}$ exist everywhere. The derivative $f_{r}^{-}$ is left-continuous, while $f_{r}^{+}$ is right-continuous. Moreover, the strict inequality $f_{r}^{-}(r) > f_{r}^{+}(r)$ occurs only on an at most countable set.
			
			\item The distributional second derivative of $f$ is a Radon measure $\nu$, which admits the decomposition $\nu=\nu_{ac}+\nu_{pp}+\nu_{sc}$. Here, $\nu_{ac}$ denotes the absolutely continuous part with respect to the Lebesgue measure, $\nu_{pp}$ is the pure point component supported on $\{r_{i}\}$, and $\nu_{sc}$ is the singular continuous component. Furthermore, the singular parts are nonpositive, i.e., $\nu_{pp} \le 0$ and $\nu_{sc} \le 0$.
			
			\item The generalized Newton-Leibniz formula holds: for any $[\alpha_1, \beta_1] \subset (\alpha, \beta)$,
			\[
			f_{r}^{-}(\beta_{1})-f_{r}^{+}(\alpha_{1})=\int_{\alpha_1}^{\beta_1} d\nu.
			\]
		\end{enumerate}
	\end{lem}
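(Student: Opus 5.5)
The plan is to reduce everything to the concave function $g(r):=f(r)-C_{1}r^{2}$ on a compact subinterval, and then transfer the classical facts about concave functions back to $f$. Fix $[\alpha_{1},\beta_{1}]\subset(\alpha,\beta)$ and choose a slightly larger $[\alpha_{2},\beta_{2}]$ with $\alpha<\alpha_{2}<\alpha_{1}<\beta_{1}<\beta_{2}<\beta$. By local semi-concavity, $g=f-Cr^{2}$ is concave on $(\alpha_{2},\beta_{2})$. The quadratic $Cr^{2}$ is smooth, so every statement in (1)--(3) holds for $f$ if and only if it holds for $g$. The only changes are that the one-sided derivatives shift by $2Cr$, the second-derivative measure shifts by the smooth density $2C\,dr$, and jumps and singular parts are untouched. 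Since all claims are local, it suffices to prove them for a concave $g$ on an open interval.

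For (1), I would use the standard slope monotonicity of concave functions: the difference quotient $\frac{g(s)-g(t)}{s-t}$ is nonincreasing in each variable. It follows that $g$ is locally Lipschitz and hence continuous. The left and right derivatives exist as monotone limits of difference quotients, and they satisfy $g^{-}_{r}(r)\ge g^{+}_{r}(r)\ge g^{-}_{r}(s)$ for $r<s$. Thus $g^{+}_{r}$ and $g^{-}_{r}$ are both nonincreasing.

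Next I would check the one-sided continuity. For $s\downarrow r$, the limit $\lim g^{+}_{r}(s)$ exists by monotonicity and is $\le g^{+}_{r}(r)$. Passing to the limit in $g(s')-g(s)\le g^{+}_{r}(s)(s'-s)$ gives the reverse inequality, so $g^{+}_{r}$ is right-continuous. The argument for left-continuity of $g^{-}_{r}$ is symmetric. The set $\{g^{-}_{r}>g^{+}_{r}\}$ consists of jump points of the monotone function $g^{+}_{r}$, since $g^{-}_{r}(r)=\lim_{s\uparrow r}g^{+}_{r}(s)$. The set of jump points of a monotone function is countable.

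For (2) and (3), the key point is that $g^{+}_{r}$ is nonincreasing and right-continuous. It therefore defines a unique nonpositive Lebesgue--Stieltjes measure $\mu$ with $\mu((a,b])=g^{+}_{r}(b)-g^{+}_{r}(a)$. I would then verify that $\mu$ is the distributional second derivative $g''$. For this, note that $g$ is locally Lipschitz with $g'=g^{+}_{r}$ a.e., so integrating by parts against $\phi\in C_{0}^{\infty}$ gives $\langle g'',\phi\rangle=-\int g^{+}_{r}\phi'\,dr=\int\phi\,d\mu$, by Fubini. Lebesgue's decomposition splits $\mu$ into absolutely continuous, pure point and singular continuous parts, and since $\mu\le0$ each part is $\le0$. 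The atoms sit exactly at the countably many points from (1), with masses $g^{+}_{r}(r_{i})-g^{-}_{r}(r_{i})$. Adding $2C\,dr$ alters only $\nu_{ac}$, so for $f$ we get $\nu_{pp}\le0$ and $\nu_{sc}\le0$ while $\nu_{ac}$ may have either sign.

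Finally, (3) follows from the interval identity $\mu([\alpha_{1},\beta_{1}])=\lim_{s\uparrow\beta_{1}}g^{+}_{r}(s)-g^{+}_{r}(\alpha_{1}^{-})$, handled with care at the endpoints. The closed interval contributes the atoms at both endpoints, which turns this into $g^{-}_{r}(\beta_{1})-g^{+}_{r}(\alpha_{1})$ up to the atom at $\alpha_{1}$. The endpoint bookkeeping is the main delicate step. The formula as stated, $f^{-}_{r}(\beta_{1})-f^{+}_{r}(\alpha_{1})$, corresponds precisely to integrating over the open interval $(\alpha_{1},\beta_{1})$, which excludes both endpoint atoms. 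I would state it that way, or equivalently over $(\alpha_{1},\beta_{1})$ with the left-continuous limit at $\beta_{1}$. Adding $\int_{\alpha_{1}}^{\beta_{1}}2C\,dr=C(\beta_{1}^{2}-\alpha_{1}^{2})$ to both sides transfers the identity from $g$ to $f$.
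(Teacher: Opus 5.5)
The paper gives no proof of this lemma; it is recalled as a standard fact about semi-concave functions. Your argument is correct and is the standard proof behind it: subtract $Cr^{2}$ on a slightly larger interval, use slope monotonicity for the concave $g$, and identify $g''$ with the nonpositive Lebesgue--Stieltjes measure of the nonincreasing right-continuous $g_r^{+}$. Your reading of $\int_{\alpha_1}^{\beta_1}d\nu$ as $\nu((\alpha_1,\beta_1))$ is also how the paper uses it in Lemma \ref{property of underline w 2}.

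Two slips in your last paragraph should be fixed:
\begin{enumerate}
\item Your endpoint identity is mislabelled. You have $\lim_{s\uparrow\beta_1}g_r^{+}(s)-g_r^{+}(\alpha_1^{-})=g_r^{-}(\beta_1)-g_r^{-}(\alpha_1)$, but this equals $\mu([\alpha_1,\beta_1))$, not $\mu([\alpha_1,\beta_1])=g_r^{+}(\beta_1)-g_r^{-}(\alpha_1)$. The clean route is $\mu((\alpha_1,\beta_1))=\lim_{s\uparrow\beta_1}\mu((\alpha_1,s])=g_r^{-}(\beta_1)-g_r^{+}(\alpha_1)$.
\item The integral of the density is miscomputed: $\int_{\alpha_1}^{\beta_1}2C\,dr=2C(\beta_1-\alpha_1)$, not $C(\beta_1^{2}-\alpha_1^{2})$. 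The value $2C(\beta_1-\alpha_1)$ is exactly the shift produced in $f_r^{-}(\beta_1)-f_r^{+}(\alpha_1)$ by $f_r^{\pm}=g_r^{\pm}+2Cr$, so the transfer from $g$ to $f$ goes through.
\end{enumerate}
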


	We now apply these properties to our specific setting via the Emden-Fowler transformation. Let $\eta=\frac{n-p}{p-1}$ and consider the change of variables $t=\eta\log\frac{\eta}{r}$ for $r\in(0,\eta)$. We define the function $\underline{w}(t)$ on $(\eta\log\eta,\infty)$ by $e^{t}\underline{w}(t)=\underline{u}(r)$.

	\begin{lem}\label{property of underline w}
		The function $\underline{w}(t)$ is locally semi-concave on $(\eta\log\eta,\infty)$ with $$\lim_{t\to\infty}\underline{w}(t)=0.$$ Moreover, in the decomposition $\underline{w}_{tt} = \nu_{ac} + \nu_{pp} + \nu_{sc}$, the singular parts $\nu_{pp}$ and $\nu_{sc}$ are nonpositive, and $\nu_{pp}$ is supported on a countable set $\{\tilde{t}_{i}\}$.
	\end{lem}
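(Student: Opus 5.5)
We will prove Lemma \ref{property of underline w} in three steps. First we establish semi-concavity of $\underline{u}$ in $r$. Then we transfer it to $\underline{w}$ through the smooth change of variables. Finally we read off the limit and the structure of the measure from Lemma \ref{properties of semi concave}.

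\textbf{Step 1: local semi-concavity of $\underline u$.} Fix a compact interval $[r_1,r_2]\subset(0,\eta)\cap(0,1)$. Since $u\in C^2$ on the closed annulus $\{r_1/2\le|x|\le 2r_2\}$, all second derivatives of $u$ are bounded there by some constant $K$. For each $r_0\in[r_1,r_2]$ pick $x_0\in\partial B_{r_0}$ with $u(x_0)=\underline u(r_0)$, and use the competitor $\phi(r)=u(r x_0/r_0)\ge \underline u(r)$. The function $\phi$ is $C^2$ in $r$ with $|\phi''|\le K$, so $\phi-\frac K2 r^2$ is concave. Thus $\underline u-\frac K2 r^2$ is an infimum of functions that lie above it and touch it at the chosen points. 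The cleaner formulation is $\underline u = \inf_{\theta\in S^{n-1}} u(r\theta)$, an infimum of a family of uniformly semi-concave functions of $r$. Such an infimum is again semi-concave with the same constant, since an infimum of concave functions is concave. Hence $\underline u$ is locally semi-concave on $(0,\eta)$.

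\textbf{Step 2: transfer to $\underline w$.} We have $\underline w(t)=e^{-t}\underline u(\eta e^{-t/\eta})$. Composition with the smooth diffeomorphism $r(t)=\eta e^{-t/\eta}$ and multiplication by the smooth positive factor $e^{-t}$ preserve local semi-concavity. Locally, write $\underline u=g+\frac K2r^2$ with $g$ concave. Then $g\circ r$ has distributional second derivative $g''(r)\,(r')^2+g'(r)\,r''$. The first term is a nonpositive measure and the second is bounded, because $g'$ is locally bounded. The product rule with $e^{-t}$ adds only terms involving $\underline u$ and its bounded one-sided derivatives. So $\underline w_{tt}$ is a locally bounded function plus a nonpositive measure, which is exactly local semi-concavity.

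\textbf{Step 3: the limit and the decomposition.} By the preceding lemma, $u(x)=o(|x|^{-\eta})$. Since $e^{t}=(\eta/r)^{\eta}$, this gives $\underline w(t)=(r/\eta)^{\eta}\underline u(r)\to0$ as $t\to\infty$. The decomposition $\underline w_{tt}=\nu_{ac}+\nu_{pp}+\nu_{sc}$, with nonpositive singular parts and $\nu_{pp}$ supported on the countable set $\{\tilde t_i\}$ where $\underline w_t^->\underline w_t^+$, is then exactly Lemma \ref{properties of semi concave} applied to $\underline w$.

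The main obstacle is only bookkeeping in Step 1. We need a uniform $C^2$ bound on annuli, which holds because $u\in C^2(B_1\setminus\{0\})$. We also need to stay inside $B_1$, which requires restricting $t$ to large values or to compact subintervals. This is harmless because local semi-concavity only requires compact subintervals.
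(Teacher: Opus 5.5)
Your proof is correct. The paper gives no argument of its own for this lemma; it only refers to \cite[Lemma 6.3]{MZ2025}. Your write-up is a self-contained version of the natural reasoning, in three parts:
\begin{itemize}
\item semi-concavity holds because $\underline u(r)=\inf_{\theta\in S^{n-1}}u(r\theta)$ is an infimum of functions that are $C^2$ in $r$ with second derivatives bounded uniformly in $\theta$ on compact annuli;
\item the decay $\underline w(t)=\eta^{-\eta}r^{\eta}\underline u(r)\to0$ follows from the preceding lemma ($|x|^{\eta}u(x)\to0$);
\item the sign and support properties of $\nu_{pp},\nu_{sc}$ are read off from Lemma \ref{properties of semi concave} once $\underline w$ is known to be locally semi-concave.
\end{itemize}

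There are three small bookkeeping slips to tidy.
\begin{itemize}
\item The relevant range is $r\in(0,1)$, which corresponds exactly to $t\in(\eta\log\eta,\infty)$. You should neither intersect with $(0,\eta)$ nor claim semi-concavity on $(0,\eta)$, since that interval exceeds the domain of $u$ when $\eta>1$.
\item The annulus $\{r_1/2\le|x|\le 2r_2\}$ can leave $B_1$ when $r_2>1/2$. The compact annulus $\{r_1\le|x|\le r_2\}\subset B_1\setminus\{0\}$ already suffices.
\item $K$ should bound the operator norm of $D^2u$ on that annulus, so that $\frac{d^2}{dr^2}u(r\theta)=\langle D^2u(r\theta)\theta,\theta\rangle\le K$.
\end{itemize}

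You can also avoid the distributional chain and product rule in Step 2 entirely. Take the infimum directly in $t$: $\underline w(t)=\inf_{\theta\in S^{n-1}}e^{-t}u(\eta e^{-t/\eta}\theta)$. Each function in this family is $C^2$ in $t$, with second derivative bounded uniformly in $\theta$ on compact $t$-intervals. Local semi-concavity of $\underline w$ then follows from the same fact you used in Step 1, that an infimum of concave functions is concave.
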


	The reader may refer to \cite[Lemma 6.3]{MZ2025} for the proof. To proceed with the analysis, we denote $\underline{z}(t)=\underline{w}_{t}^{+}(t)$ and $q=\frac{n(p-1)}{n-p}$, and consider the trajectory $(\underline{w}(t),\underline{z}(t))$ in the $(\underline{w},\underline{z})$-plane.
	
	\begin{lem}\label{property of underline w 2}
		\begin{enumerate}
			\item $\underline{w}(t)$ is continuous for $t\ge\eta\log\eta$. For $t\in \operatorname{supp}(\nu_{pp})=\{\tilde{t}_{i}\}$,
			$\underline{w}_{t}^{-}(\tilde{t}_{i})>\underline{w}_{t}^{+}(\tilde{t}_{i})$. Moreover,
			$\underline{z}(t)$ is lower semi-continuous, right continuous, of
			locally bounded variation in $[\eta\log\eta,\infty)$ and is continuous
			for $t\ne\tilde{t}_{i}$.
			\item If $\underline{w}(t)+\underline{z}(t)>0$ for $t\in[t_{0},t_{1}]\subset(\eta\log\eta,\infty)$
			then 
			\[
			\underline{z}(t_{1})-\underline{z}(t_{0})\le\underline{w}_{t}^{-}(t_{1})-\underline{w}_{t}^{+}(t_{0})\le\int_{t_{0}}^{t_{1}}\left(-\underline{z}-\frac{\underline{w}^{q}}{(p-1)(\underline{w}+\underline{z})^{p-2}}\right)dt.
			\]
		\end{enumerate}
	\end{lem}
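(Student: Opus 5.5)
Part 1 follows from Lemma \ref{property of underline w} together with Lemma \ref{properties of semi concave}. Local semi-concavity gives continuity of $\underline{w}$ and the existence of both one-sided derivatives. By part 1 of Lemma \ref{properties of semi concave}, $\underline{z}=\underline{w}_t^+$ is right continuous. On compact subintervals we have $\underline{w}_t^+=(\underline{w}-Ct^2)_t^{+}+2Ct$, and the right derivative of a concave function is nonincreasing. Hence $\underline{z}$ is a monotone function plus a smooth one, so it has locally bounded variation. At points where $\underline{w}_t^-=\underline{w}_t^+$, left continuity of $\underline{w}_t^-$ and right continuity of $\underline{w}_t^+$ give continuity of $\underline{z}$. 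Points with $\underline{w}_t^->\underline{w}_t^+$ are exactly the atoms of $\nu_{pp}$, because the jump of the distributional derivative at $\tilde t$ equals $\nu(\{\tilde t\})=\underline{w}_t^+(\tilde t)-\underline{w}_t^-(\tilde t)<0$. Lower semicontinuity at $\tilde t_i$ then follows: the left limit of $\underline{z}$ is $\underline{w}_t^-(\tilde t_i)>\underline{z}(\tilde t_i)$ and the right limit equals $\underline{z}(\tilde t_i)$, so $\liminf_{t\to\tilde t_i}\underline{z}\ge\underline{z}(\tilde t_i)$. For the endpoint $\eta\log\eta$, I would use the fact that $u$ is $C^2$ up to $|x|=\eta$ (shrinking the domain if necessary), or simply read the claim on the open interval.

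For part 2, the first inequality is immediate: $\underline{z}(t_1)=\underline{w}_t^+(t_1)\le \underline{w}_t^-(t_1)$ because the singular parts are nonpositive. For the second, I would apply the Newton--Leibniz formula of Lemma \ref{properties of semi concave}, writing $\underline{w}_t^-(t_1)-\underline{w}_t^+(t_0)=\nu((t_0,t_1))$ (with the appropriate interval convention). I would drop $\nu_{pp}+\nu_{sc}\le 0$, leaving $\int_{t_0}^{t_1}\underline{w}_{tt}^{ac}\,dt$. So it suffices to show the pointwise differential inequality
\[
\underline{w}_{tt}\le -\underline{z}-\frac{\underline{w}^q}{(p-1)(\underline{w}+\underline{z})^{p-2}}
\]
at a.e.\ $t$ (Alexandrov points, where $\underline{w}$ is twice differentiable), under the hypothesis $\underline{w}+\underline{z}>0$.

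To obtain this inequality, I would use a viscosity/touching argument. Fix such a $t$ and the corresponding $r$, and pick $x_0\in\partial B_r$ with $u(x_0)=\underline{u}(r)$. The radial function $\phi(x)=\underline{u}(|x|)$ satisfies $\phi\le u$ near $\partial B_r$, with equality at $x_0$. At an Alexandrov point, $\underline u$ has a second-order expansion, so for small $\epsilon>0$ the radial quadratic $\phi_\epsilon$ built from the Taylor polynomial of $\underline{u}$ minus $\epsilon(|x|-r)^2$ touches $u$ from below at $x_0$. Since $u$ is a $C^2$ solution, $-\Delta_p u(x_0)=u^q(x_0)$, and comparing Hessians at a touching point from below gives $-\Delta_p\phi_\epsilon(x_0)\ge u^q(x_0)$, provided $\nabla\phi_\epsilon(x_0)\neq0$ so that the operator is monotone in $D^2$. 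The gradient condition is exactly where $\underline{w}+\underline{z}>0$ enters. With $t=\eta\log(\eta/r)$ and $\underline{u}=e^t\underline{w}$, we get $\underline{u}_r=-\frac{\eta}{r}e^{t}(\underline{w}+\underline{w}_t)$, which is nonzero, indeed negative. For radial functions, $-\Delta_p\phi=-(p-1)|\phi_r|^{p-2}\phi_{rr}-\frac{n-1}{r}|\phi_r|^{p-2}\phi_r$. Substituting the Emden--Fowler variables, the choice $\eta=(n-p)/(p-1)$ together with the critical exponent $q$ makes all powers of $r$ cancel (one checks $e^{tq}=e^{t(p-1)}\cdot(\eta/r)^p$-compatible). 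After letting $\epsilon\to0$ and dividing by $(p-1)(\underline{w}+\underline{w}_t)^{p-2}>0$, this yields $\underline{w}_{tt}+\underline{w}_t\le -\underline{w}^q/((p-1)(\underline{w}+\underline{w}_t)^{p-2})$, with constants absorbed in the normalization of $t$.

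I expect the main obstacle to be the bookkeeping in this last step. It requires verifying that the coefficients reduce exactly to $-\underline{z}$, with the $(n-1)/r$ term combining with the chain-rule terms into a single first-order term. It also requires justifying that the inequality holds at a.e.\ point where the hypothesis applies, and that $\underline{w}_t=\underline{z}$ there.
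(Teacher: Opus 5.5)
Your proposal is correct and follows essentially the route the paper intends (it defers to \cite[Lemma 6.4]{MZ2025}). Part 1 comes from the semi-concavity facts in Lemma \ref{properties of semi concave}, and Part 2 from the Newton--Leibniz formula with the nonpositive singular parts dropped, together with the a.e.\ differential inequality for the absolutely continuous part obtained by touching $u$ from below at a point realizing $\underline{u}(r)$. Your bookkeeping also closes up with no leftover constants: for $\phi=e^{t}W$ with $t=\eta\log(\eta/r)$ one gets $\Delta_p\phi=(p-1)e^{qt}|W+W_t|^{p-2}(W_{tt}+W_t)$, so the inequality reduces exactly to the integrand in the statement.
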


	The proof is a direct consequence of Lemma \ref{properties of semi concave} and is analogous to that of \cite[Lemma 6.4]{MZ2025}.
	
	\begin{lem}\label{z less than 0}
		
		There is $T>0$ such that, $\underline{z}(t)<0$ for all $t\in[T,\infty)$.
	\end{lem}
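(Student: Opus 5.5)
Since $\underline{w}>0$ and $\underline{w}(t)\to0$ as $t\to\infty$ (Lemma \ref{property of underline w}), the derivative $\underline{z}$ must be negative somewhere along every tail. The task is to upgrade ``negative somewhere'' to ``negative eventually''. The plan has two parts: show that $\underline{z}$ takes negative values at arbitrarily large times, and show that once $\underline{z}$ is negative it can never climb back to $0$.

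\textbf{Step 1: negative values at arbitrarily large times.} If $\underline{z}\ge0$ on some interval $[t_0,\infty)$, then $\underline{w}$ is nondecreasing there, because $\underline{w}$ is continuous and its right derivative is nonnegative. Since $\underline{w}>0$, this contradicts $\underline{w}\to0$. Hence for every $T'$ there is $t>T'$ with $\underline{z}(t)<0$.

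\textbf{Step 2: the set $\{\underline{z}<0\}$ is forward invariant for large $t$.} Pick $T$ large, with $\underline{z}(T)<0$ and $\underline{w}$ small on $[T,\infty)$. Suppose, for contradiction, that
\[
t_1=\inf\{t>T:\underline{z}(t)\ge0\}<\infty .
\]
Right continuity of $\underline{z}$ gives $\underline{z}(t_1)\ge0$, while $\underline{z}<0$ on $[T,t_1)$.

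We use the following facts about the jumps of $\underline{z}$. At the jump points $\tilde t_i$ we have $\underline{w}_t^-(\tilde t_i)>\underline{w}_t^+(\tilde t_i)$, so jumps are downward. Because the singular parts of $\underline{w}_{tt}$ are nonpositive, $\underline{z}$ can increase only through the absolutely continuous part $\nu_{ac}$. Consequently $\underline{z}$ approaches $t_1$ continuously from below, and $\underline{z}(t_1)=0$ while $\underline{z}(t)\to0^-$ as $t\to t_1^-$.

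Now take $t_0<t_1$ close to $t_1$. Then $\underline{z}(t)$ is close to $0$ on $[t_0,t_1]$, and $\underline{w}>0$ gives $\underline{w}+\underline{z}>0$ there. Lemma \ref{property of underline w 2}(2) applies and yields
\[
\underline{z}(t_1)-\underline{z}(t_0)\le\int_{t_0}^{t_1}\left(-\underline{z}-\frac{\underline{w}^{q}}{(p-1)(\underline{w}+\underline{z})^{p-2}}\right)dt .
\]
When $\underline{z}$ is near $0$ and $\underline{w}$ is bounded below on the short interval, the integrand is approximately $-\underline{w}^{q-p+2}/(p-1)<0$. So $\underline{z}$ strictly decreases near $t_1$, which contradicts its rise from negative values to $0$.

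To make this rigorous, I would choose $t_0$ with $\underline{z}(t_0)>-\varepsilon$, where $\varepsilon\ll\underline{w}(t_1)$. I would then show that on $[t_0,t_1]$ the integrand is at most
\[
\varepsilon-c\,\underline{w}(t_1)^{q-p+2}<0 .
\]
This forces $\underline{z}(t_1)<\underline{z}(t_0)<0$, the desired contradiction. The case $p<2$ must be checked here: then $(\underline{w}+\underline{z})^{2-p}$ appears in the numerator, but it stays comparable to $\underline{w}^{2-p}$ because $\underline{z}$ is small relative to $\underline{w}$.

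\textbf{Main obstacle.} The delicate point is the behaviour of $\underline{z}$ just before $t_1$. The argument needs that $\underline{z}$ is close to $0$ on a left neighbourhood of $t_1$, which in turn needs both that $\underline{z}$ does not jump upward and that $\underline{z}(t_1)$ equals the left limit. For this I would use the lower semicontinuity and right continuity from Lemma \ref{property of underline w 2}(1), together with the fact that jumps satisfy $\underline{w}_t^-(\tilde t_i)>\underline{w}_t^+(\tilde t_i)$. In particular, the left derivative at $t_1$ satisfies $\underline{w}_t^-(t_1)\ge\underline{z}(t_1)\ge0$. The left derivative is also the left limit of $\underline{z}$, which is $\le0$. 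Hence it equals $0$, and $\underline{z}$ is continuous at $t_1$.
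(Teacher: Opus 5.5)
Your proposal is correct and follows essentially the same route as the paper: take $T$ with $\underline{z}(T)<0$, let $t_1$ be the first time with $\underline{z}\ge0$, show $\underline{z}(t_1)=0$ with left limit $0$, and then apply item 2 of Lemma~\ref{property of underline w 2} on $[t_0,t_1]$, where the integrand is negative, to get $\underline{z}(t_1)<\underline{z}(t_0)<0$. The only differences are minor: your Step 1 justifies the existence of such a $T$, which the paper takes for granted, and you get continuity of $\underline{z}$ at $t_1$ from $\underline{w}_t^-(t_1)\ge\underline{z}(t_1)$ and left-continuity, where the paper uses lower semicontinuity (you also treat $p<2$ explicitly).
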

	
	\begin{proof}
		
		Choose $T>\eta\log\eta$ such that $\underline z(T)<0$.
		Suppose, for contradiction, that $\underline z(t)<0$ does not hold for all $t\ge T$.
		Set
		\[
		t_{1}=\sup\left\{ s\ge T:\;\underline{z}(t)<0\text{ for all } t\in [T,s)\right\} .
		\]
		By definition we have $\underline{z}(t)<0$ on $[T,t_{1})$.
		Since $\underline{z}(t)$
		is right continuous, we must have $\underline{z}(t_{1})\ge0$. On the other hand, the lower semicontinuity of $\underline z$ together with $\underline z(t)<0$ for $t<t_1$ implies
		\[
		\lim_{t\to t_1^-}\underline z(t)=0,
		\]
		hence $\underline z(t_1)=0$. Moreover, since 
		\[
		0\ge\liminf_{t\to t_{1}}\underline{z}(t)\ge\underline{z}(t_{1})=0,
		\]
		we derive 
		\[
		\lim_{t\to t_{1}}\underline{z}(t)=0.
		\]
		Next, observe that $w(t)$ is monotone decreasing on $[T,t_1]$, so $w(t)\ge w(t_1)$ for $t\in[T,t_1]$.
		Therefore we can choose $\delta>0$ sufficiently small so that on $[t_1-\delta,t_1]$ the following strict inequality holds
		\[
		-\underline{z}-\frac{\underline{w}^{q}}{(p-1)(\underline{w}+\underline{z})^{p-2}}<0.
		\]
		By Item 2 of Lemma \ref{property of underline w 2}, $\underline{z}(t_{1})<\underline{z}(t_{1}-\delta)$,
		which is a contradiction.
	\end{proof}
	
	For fixed $0<\varepsilon_{0},\delta_{0}<1$, we let $k=k(\varepsilon_{0})=\frac{1}{(p-1)(1+\varepsilon_{0})}$,
	and 
	\[
	\Theta_{k}^{\delta_{0}}=\left\{ (\underline{w},\underline{z});0<\underline{w}\le\delta_{0},\underline{z}<-k\underline{w}^{q-p+2}\right\} .
	\]

	\begin{lem}
		Suppose that for fixed $\varepsilon_{0}, \delta_{0} \in (0,1)$, there exists $T>0$ such that
		\[
		(\underline{w}(t),\underline{z}(t))\in\Theta_{k}^{\delta_{0}} \quad \text{for all } t>T.
		\]
		Then the conclusion of Theorem \ref{main thm1} holds.
	\end{lem}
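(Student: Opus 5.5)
The plan is to turn the membership $(\underline{w},\underline{z})\in\Theta_k^{\delta_0}$ into a sharp decay rate for $\underline{w}$, then read off the asymptotics of $u$ from Theorem \ref{thm:MZ_old}. First note the exponents: with $q=\frac{n(p-1)}{n-p}$ we have $q-p+1=\frac{p(p-1)}{n-p}$, so $\frac{1}{q-p+1}=\frac{n-p}{p(p-1)}$, which is exactly the exponent in Theorem \ref{main thm1}. Dispose of the trivial alternatives at once. If $u$ admits a $C^{1,\alpha}$ extension there is nothing to prove. Case 2 of Theorem \ref{thm:MZ_old} is excluded by the preceding lemma ($m=0$). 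Hence we may assume alternative 3 of Theorem \ref{thm:MZ_old} holds. Translated through $e^{t}\underline{w}(t)=\underline{u}(r)$ and $t=\eta\log\frac{\eta}{r}$, this gives the lower bound
\[
\liminf_{t\to\infty} t^{\frac{1}{q-p+1}}\underline{w}(t)\ge\Big(\tfrac{n-p}{p}\cdot\tfrac{1}{q-p+1}\cdot(q-p+1)\Big)^{\frac1{q-p+1}}\cdot(\text{explicit }\eta\text{-factor}),
\]
and the task reduces to proving the matching upper bound.

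\emph{Step 1: bootstrap $\varepsilon_0\to0$.} I would show that if the trajectory eventually lies in $\Theta_{k(\varepsilon_0)}^{\delta_0}$ for one $\varepsilon_0$, then for every $\varepsilon\in(0,\varepsilon_0)$ it eventually lies in $\Theta_{k(\varepsilon)}^{\delta}$ for small $\delta$. In $\Theta$ we have $-\underline{w}\le\underline{z}<-k\underline{w}^{q-p+2}$ and $\underline w\to0$. It seems that we also have $|\underline{z}|=o(\underline{w})$ along the trajectory; if not, I would get this from the same differential inequality. Then $(\underline{w}+\underline{z})^{p-2}=\underline{w}^{p-2}(1+o(1))$, and Item 2 of Lemma \ref{property of underline w 2} gives, in the integrated sense,
\[
\underline{z}'\le-\underline{z}-\tfrac{1+o(1)}{p-1}\underline{w}^{q-p+2}.
\]
Set $\Phi=\underline{z}+k(\varepsilon)\underline{w}^{q-p+2}$. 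Its derivative is controlled by $\underline z'+O(\underline w^{q-p+1}|\underline z|)$, where the correction is of higher order. This yields $\Phi'\le-\Phi-c(\varepsilon)\underline{w}^{q-p+2}$ with $c(\varepsilon)>0$, so $\Phi$ cannot remain nonnegative on a long interval. Once $\Phi<0$, the same inequality prevents it from returning to $0$ (argue as in Lemma \ref{z less than 0}, using right continuity and lower semicontinuity of $\underline z$, and the fact that jumps of $\underline z$ are downward since $\nu_{pp}\le0$). This step is the main obstacle. The delicate points are making the $o(1)$ errors uniform and handling the measure-valued derivative rigorously. I would work throughout with the integral form of Item 2 on short intervals.

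\emph{Step 2: integrate.} On $\Theta_{k(\varepsilon)}$ we have $\underline{w}^{+}_t=\underline z<-k(\varepsilon)\underline{w}^{q-p+2}$. Since $\underline w$ is locally semi-concave, and hence locally Lipschitz, we get $\frac{d}{dt}\underline{w}^{-(q-p+1)}\ge(q-p+1)k(\varepsilon)$ a.e. Integrating,
\[
\underline{w}(t)\le\big((q-p+1)k(\varepsilon)\,t(1+o(1))\big)^{-\frac1{q-p+1}}.
\]
Letting $\varepsilon\to0$, $(q-p+1)k\to\frac{p}{n-p}$, so $\limsup_{t\to\infty} t^{\frac{n-p}{p(p-1)}}\underline{w}\le(\frac{n-p}{p})^{\frac{n-p}{p(p-1)}}$.

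\emph{Step 3: back to $u$.} Substitute $t=\eta\log\frac{\eta}{r}\sim\eta\log\frac1r$ and $\underline{u}(r)=e^t\underline w=\eta^{\eta}r^{-\eta}\underline w$. The powers of $\eta$ combine to $\eta^{(p-1)\frac{n-p}{p(p-1)}}$, reproducing the constant $\big((\frac{n-p}{p})(\frac{n-p}{p-1})^{p-1}\big)^{\frac{n-p}{p(p-1)}}$. Together with the lower bound from alternative 3 of Theorem \ref{thm:MZ_old}, this gives the limit for $\underline u$. Finally, $\lim_{x\to0}u(x)/\underline u(|x|)=1$ from Theorem \ref{thm:MZ_old} transfers the limit to $u$ itself.
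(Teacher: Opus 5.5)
Your proposal is correct in outline, but it is organized differently from the paper.

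\textbf{How the paper argues.} The paper's proof of this lemma consists only of your Steps 2 and 3. It integrates $\frac{d}{dt}\underline{w}^{p-q-1}>-k(p-q-1)$, gets the upper bound with the factor $(1+2\varepsilon_0)$, and then removes that factor by declaring $\varepsilon_0$ arbitrary. This is legitimate only because Lemma \ref{key lemma to prove thm 1}, together with the entry lemma that follows it, shows the hypothesis holds for \emph{every} $\varepsilon_0\in(0,1)$. Read literally for one fixed $\varepsilon_0$, the paper's argument gives the sharp constant only up to a factor that tends to $1$ as $\varepsilon_0\to0$.

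\textbf{How your Step 1 differs.} Your Step 1 settles this point inside the lemma. However, it uses essentially nothing from the hypothesis beyond $\underline z<0$ and $\underline w\to0$. So it is in effect an independent proof of those two later lemmas:
\begin{itemize}
\item your entry step uses $\Phi'\le-\Phi-c\,\underline w^{q-p+2}$ on $\{\Phi\ge0\}$, whereas the paper argues that $d\underline z/d\underline w\ge\varepsilon/2$ is incompatible with $\underline z(\underline w)\to0$;
\item your no-return step runs through $\Phi$, whereas the paper compares slopes at an exit time.
\end{itemize}
The paper's route is shorter and reuses machinery it needs anyway. Yours makes the lemma true as literally stated, at the cost of duplicating that analysis. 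You also apply alternative 3 of Theorem \ref{thm:MZ_old} directly to $\underline u$, rather than through $\bar u$ and $\bar u/\underline u\to1$; this difference is immaterial.

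\textbf{One soft spot to repair.} Do not rest Step 1 on $|\underline z|=o(\underline w)$ along the whole trajectory. Item 2 of Lemma \ref{property of underline w 2} only bounds increments of $\underline z$ from above. It is not clear how to extract the needed lower bound $\underline z\ge-\epsilon\,\underline w$ from it, and your remark that you would ``get this from the same differential inequality'' does not address that.

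Fortunately you do not need the global claim. Step 1 uses it only on $\{\Phi\ge0\}$ and near a time at which $\Phi$ would return to $0$. There you have $|\underline z|\le k\underline w^{q-p+2}+|\Phi|$, which is automatically small compared with $\underline w$. This also gives $\underline w+\underline z>0$ there, as Item 2 requires. That condition is not part of the definition of $\Theta_k^{\delta_0}$, contrary to what you wrote. For $p\ge2$ even this is unnecessary, since $(\underline w+\underline z)^{p-2}\le\underline w^{p-2}$ has the favorable sign.
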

	
	\begin{proof}
		
		Firstly, we establish  that if the trajectory $(\underline{w}(t),\underline{z}(t))$ remains in the region $\Theta_{k}^{\delta_{0}}$ for all $t>T$,
		then the differential inequality
		\[
		\frac{d}{dt}\underline{w}(t)<-k\underline{w}^{q-p+2}
		\]
		holds in the sense of distributions on $(T,\infty)$.
		Indeed, by Lemma \ref{property of underline w 2}, $\underline{w}_{t}^{-}(\tilde{t}_{i})>\underline{w}_{t}^{+}(\tilde{t}_{i})$
		on $\{\tilde{t}_{i}\}=\operatorname{supp}\left(\nu_{pp}\right)$
		and we obtain
		\[
		\frac{d}{dt}\underline{w}^{p-q-1}>-k(p-q-1)
		\]
		in the sense of distributions. For fixed $t_{1}>T$, integrating the inequality above from $t_1$ to $t>t_1$ gives
		\[
		\underline{w}^{p-q-1}(t)-\underline{w}^{p-q-1}(t_{1})>-k(p-q-1)(t-t_{1}),
		\]
		and hence, for $t$ large, 
		\begin{align*}
			\underline{w}^{p-q-1}(t) & \ge-\frac{p-q-1}{(p-1)(1+\varepsilon_{0})}t+C\\
			& =\frac{p}{(n-p)(1+\varepsilon_{0})}t+C,\\
			& \ge\frac{p}{(n-p)(1+2\varepsilon_{0})}t.
		\end{align*}
		Consequently,  for all sufficiently large $t$, 
		\[
		\underline{w}(t)\le\left(\frac{p}{(n-p)(1+2\varepsilon_{0})}\right)^{-\frac{n-p}{p(p-1)}}t^{-\frac{n-p}{p(p-1)}}.
		\]
		Since $\varepsilon_0>0$ was arbitrary (and may be taken arbitrarily small), we deduce
		\[
		\limsup_{t\to\infty}t^{\frac{n-p}{p(p-1)}}\underline{w}(t)\le\left(\frac{p}{n-p}\right)^{-\frac{n-p}{p(p-1)}}.
		\]
		Translating this estimate back to the original variable $x$ yields
		\[
		\limsup_{x\to0}\left(\log\frac{1}{|x|}\right)^{\frac{n-p}{p(p-1)}}|x|^{\frac{n-p}{p-1}}\underline{u}(|x|)\le\left(\left(\frac{n-p}{p}\right)\left(\frac{n-p}{p-1}\right)^{p-1}\right)^{\frac{n-p}{p(p-1)}}.
		\]
		
		On the other hand, if the singularity is non-removable, Theorem \ref{thm:MZ_old} provides the matching lower bound
		\[
		\liminf_{x\to0}\left(\log\frac{1}{|x|}\right)^{\frac{n-p}{p(p-1)}}|x|^{\frac{n-p}{p-1}}\bar{u}(|x|)\ge\left(\left(\frac{n-p}{p}\right)\left(\frac{n-p}{p-1}\right)^{p-1}\right)^{\frac{n-p}{p(p-1)}}
		\]
		and moreover $\bar u(|x|)/\underline u(|x|)\to1$ as $x\to0$. Combining these facts yields
		\[
		\lim_{x\to0}\left(\log\frac{1}{|x|}\right)^{\frac{n-p}{p(p-1)}}|x|^{\frac{n-p}{p-1}}u(x)=\left(\left(\frac{n-p}{p}\right)\left(\frac{n-p}{p-1}\right)^{p-1}\right)^{\frac{n-p}{p(p-1)}},
		\]
		which is precisely the conclusion of Theorem~\ref{main thm1}.
	\end{proof}
	
	Central to our argument is the following lemma.
	
	\begin{lem}\label{key lemma to prove thm 1}
		There exists $T>0$ such that if $(\underline{w}(t_{0}),\underline{z}(t_{0}))\in\Theta_{k}^{\delta_{0}}$ for some $t_{0}>T$, then the trajectory remains in $\Theta_{k}^{\delta_{0}}$ for all $t>t_{0}$.
	\end{lem}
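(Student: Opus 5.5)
The plan is a first-exit-time argument. The condition $\underline w\le\delta_0$ and positivity are the easy parts, and the real work is the barrier $\underline z<-k\underline w^{s}$, where
\[
s:=q-p+2,\qquad s-1=q-p+1=\frac{p(p-1)}{n-p}>0.
\]

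\textbf{Choice of $T$.} Take $T$ so large that three things hold for all $t\ge T$:
\begin{enumerate}
\item $\underline z(t)<0$, by Lemma \ref{z less than 0}. Then $\underline w$ is nonincreasing on $[T,\infty)$, so $\underline w\le\delta_0$ persists, and $\underline w>0$ always since $u>0$.
\item $\underline w(t)\le\epsilon_1$, by Lemma \ref{property of underline w}, where $\epsilon_1$ depends only on $\varepsilon_0,n,p$ and is fixed below.
\item $\underline w+\underline z>0$, so that Item 2 of Lemma \ref{property of underline w 2} applies.
\end{enumerate}
Condition 3 is the point I am least sure of. I would prove it as in \cite{MZ2025}: $\underline w+\underline z$ is, up to the factor $r/\eta\cdot e^{-t}$, equal to $-\underline u_r^{+}$. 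So it suffices to show that $\underline u$ is strictly decreasing near $0$. That should follow from $p$-superharmonicity of $u$ together with $\underline u\to\infty$, which holds in the non-removable case; in the removable case the conclusion of Theorem \ref{main thm1} is already true. If needed, I would restrict to the region where $\underline w+\underline z>0$ and check that the trajectory cannot reach $\underline w+\underline z=0$ from $\Theta_k^{\delta_0}$ either.

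\textbf{Exit-time argument.} Suppose $(\underline w(t_0),\underline z(t_0))\in\Theta_k^{\delta_0}$ with $t_0>T$ but the trajectory leaves the region. Set
\[
F(t)=\underline z(t)+k\underline w(t)^{s},\qquad t_1=\sup\{\tau>t_0:\ F<0 \text{ on } [t_0,\tau)\}.
\]
Since $\underline w$ is continuous and $\underline z$ is right continuous, $F(t_1)\ge0$. Since $\underline z$ is lower semicontinuous and only jumps downward (because $\nu_{pp}\le0$), the same reasoning as in Lemma \ref{z less than 0} gives $F(t_1)=0$ and $F(t)\to0$ as $t\to t_1$. Hence $\underline z(t)\to-k\underline w(t_1)^{s}$ as $t\to t_1$.

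\textbf{Key computation.} Along the boundary $\underline z=-k\underline w^{s}$, use $q=s+p-2$ to write
\[
\frac{\underline w^{q}}{(p-1)(\underline w+\underline z)^{p-2}}=\frac{\underline w^{s}}{p-1}\bigl(1-k\underline w^{s-1}\bigr)^{-(p-2)}.
\]
Then the right-hand side of Item 2 of Lemma \ref{property of underline w 2}, minus the derivative of $k\underline w^{s}$ (which is $ks\underline w^{s-1}\underline z$ a.e., since $\underline w$ is locally Lipschitz), equals
\[
\underline w^{s}\Bigl(k-\frac{1}{p-1}\bigl(1-k\underline w^{s-1}\bigr)^{-(p-2)}+k^2 s\,\underline w^{s-1}\Bigr).
\]
Because $k=\frac{1}{(p-1)(1+\varepsilon_0)}<\frac1{p-1}$ and $s-1>0$, this is at most $-c(\varepsilon_0)\underline w^{s}<0$ once $\underline w\le\epsilon_1$. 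This is where $\epsilon_1$ is fixed.

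\textbf{Conclusion.} By continuity, the same strict negativity holds for $(\underline w,\underline z)$ in a neighborhood of the boundary point, so on some interval $[t_1-\delta,t_1]$. Integrating with the generalized Newton--Leibniz inequality of Lemma \ref{property of underline w 2} gives $F(t_1)<F(t_1-\delta)<0$, which contradicts $F(t_1)=0$. Hence the trajectory stays in $\Theta_k^{\delta_0}$ for all $t>t_0$.

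The main obstacle is the measure-theoretic bookkeeping at $t_1$: making the left limit and the one-sided integral inequality rigorous when $\underline z$ has downward jumps. Those jumps only help, since they push $F$ further into the region. The other delicate point is securing $\underline w+\underline z>0$.
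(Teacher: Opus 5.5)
Your argument is correct and is essentially the paper's exit-time argument at the curve $\underline z=-k\underline w^{q-p+2}$. The paper compares the left difference quotients $\frac{\underline z(t_1)-\underline z(t)}{\underline w(t_1)-\underline w(t)}$, bounded below via Item 2 of Lemma \ref{property of underline w 2}, with the slope $-k(q-p+2)\underline w^{q-p+1}(t_1)$ of the boundary curve; this amounts to exactly your sign condition, whereas you integrate $F$ over $[t_1-\delta,t_1]$.

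Two minor points: $F'$ involves plus, not minus, the derivative of $k\underline w^{q-p+2}$, so the last term in your bracket should be $-k^{2}(q-p+2)\underline w^{q-p+1}$, which is harmless since it is lower order. Also, your global condition 3 is unnecessary, because Item 2 is only used on $[t_1-\delta,t_1]$, where $\underline w+\underline z$ is close to $\underline w(t_1)\bigl(1-k\underline w(t_1)^{q-p+1}\bigr)>0$ automatically.
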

	
	\begin{proof}
		The proof is analogous to that of \cite[Lemma 6.6]{MZ2025}. Yet we still provide the full details below. 
		Since 
		\[
		\lim_{t\to\infty}\underline{w}(t)=0,
		\]
		we may choose $T>0$ sufficiently large such that $0<\underline{w}(t)<\delta_{0}$ for all $t\ge T$. Assume now that for some $t_{0}\ge T$, 
		\[
		(\underline{w}(t_{0}),\underline{z}(t_{0}))\in\Theta_{k}^{\delta_{0}}.
		\]
		For $t\ge T$, we have $\underline{w}(t)>0$ and $\underline{z}(t)$ is locally bounded from below. Therefore, the only possible way in which the trajectory $(\underline{w}(t),\underline{z}(t))$ could leave $\Theta_{k}^{\delta_0}$ for $t>t_{0}$ is that there exists $t_{1}>t_{0}$ such that $\underline{z}(t_{1})=-k\underline{w}^{q-p+2}(t_{1})$ and
		\[
		(\underline{w}(t),\underline{z}(t))\in\Theta_{k}^{\delta_{0}}
		\qquad\text{for all } t\in[t_{0},t_{1}),
		\]
		and
		\[
		(\underline{w}(t),\underline{z}(t))\to(\underline{w}(t_{1}),\underline{z}(t_{1}))
		\quad\text{as }t\to t_{1}^{-}.
		\]
		From this we infer 
		\begin{align*}
			\limsup_{t\to t_{1}^{-}}\frac{\underline{z}(t)-\underline{z}(t_{1})}{\underline{w}(t)-\underline{w}(t_{1})} & \le\frac{d(-k\underline{w}^{q-p+2})}{d\underline{w}}\Big|_{\underline{w}=\underline{w}(t_{1})}\\
			& =-k(q-p+2)\underline{w}^{q-p+1}(t_{1})\\
			& \to0
		\end{align*}
		as $t_{1}\to\infty$.
		
		On the other hand, since $\underline{z}(t)<0$ for $t\in[t_{0},t_{1}]$, we have $\underline{w}(t_{1})<\underline{w}(t)$ for $t<t_{1}$. Using Lemma \ref{property of underline w 2}, we obtain
		\begin{align*}
			\liminf_{t\to t_{1}^{-}}\frac{\underline{z}(t_{1})-\underline{z}(t)}{\underline{w}(t_{1})-\underline{w}(t)} & =\liminf_{t\to t_{1}^{-}}\frac{\underline{z}(t_{1})-\underline{z}(t)}{t_{1}-t}\frac{t_{1}-t}{\underline{w}(t_{1})-\underline{w}(t)}\\
			& \ge\liminf_{t\to t_{1}^{-}}\frac{\int_{t}^{t_{1}}\left(-\underline{z}-\frac{\underline{w}^{q}}{(p-1)|\underline{w}+\underline{z}|^{p-2}}\right)d\tau}{t_{1}-t}\frac{t_{1}-t}{\underline{w}(t_{1})-\underline{w}(t)}\\
			& =\left(-\underline{z}(t_{1})-\frac{\underline{w}^{q}(t_{1})}{(p-1)|\underline{w}(t_{1})+\underline{z}(t_{1})|^{p-2}}\right)\frac{1}{\underline{z}(t_{1})}\\
			& =-1+\frac{\underline{w}^{p-2}(t_{1})}{k(p-1)|\underline{w}(t_{1})+\underline{z}(t_{1})|^{p-2}}\\
			& \to-1+1+\varepsilon_{0}=\varepsilon_{0}
		\end{align*}
		as $t_{1}\to\infty.$ 
		
		For $t_{1}$ sufficiently large, the above lower bound $\varepsilon_{0}>0$ contradicts the previous upper bound tending to $0$, which ends the proof of Lemma \ref{key lemma to prove thm 1}.
	\end{proof}

	In light of Lemma \ref{key lemma to prove thm 1}, the proof of Theorem \ref{main thm1} is reduced to establishing the existence of an entry time into the region $\Theta_{k}^{\delta_{0}}$.
	
	\begin{lem}
		Let $\varepsilon_{0}, \delta_{0}, k$ and $T$ be as defined in Lemma \ref{key lemma to prove thm 1}. Then there exists $t_{0}\ge T$ such that $(\underline{w}(t_{0}),\underline{z}(t_{0}))\in\Theta_{k}^{\delta_{0}}.$
	\end{lem}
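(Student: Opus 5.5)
We argue by contradiction. Suppose that $(\underline{w}(t),\underline{z}(t))\notin\Theta_{k}^{\delta_{0}}$ for every $t\ge T$. Since $T$ was chosen so that $0<\underline{w}(t)<\delta_{0}$ for $t\ge T$, this means
\[
-k\underline{w}^{q-p+2}(t)\le\underline{z}(t)\qquad\text{for all } t\ge T.
\]
Enlarging $T$ if necessary, Lemma \ref{z less than 0} gives $\underline{z}(t)<0$ on $[T,\infty)$. Hence
\[
-k\underline{w}^{q-p+2}\le\underline{w}_{t}^{+}<0 ,
\]
so $|\underline{z}|\le k\underline{w}^{q-p+2}=o(\underline{w})$ as $t\to\infty$, because $q-p+2>1$ and $\underline{w}\to0$. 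In the $(\underline{w},\underline{z})$-plane the trajectory therefore creeps along the $\underline{w}$-axis, and $\underline{w}+\underline{z}=\underline{w}(1+o(1))>0$.

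We first show that this slow decay forces $\underline{w}$ to be too large. Because the jumps of $\underline{w}_t$ at the points $\tilde t_i$ are downward (Lemma \ref{property of underline w 2}), the inequality $\underline{w}_t\ge -k\underline{w}^{q-p+2}$ holds in the distributional sense. Integrating exactly as in the preceding lemma, with the inequality now reversed, gives
\[
\underline{w}^{p-q-1}(t)\le \frac{p}{(n-p)(1+\varepsilon_0)}\,t+C .
\]
Consequently
\[
\liminf_{t\to\infty} t^{\frac{n-p}{p(p-1)}}\underline{w}(t)\ge\Big(\frac{p}{(n-p)(1+\varepsilon_0)}\Big)^{-\frac{n-p}{p(p-1)}}
\]
is strictly larger than the critical constant. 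So the solution sits strictly above the critical profile, and in particular the singularity is not removable.

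We then derive a contradiction from the dynamics. Apply Item 2 of Lemma \ref{property of underline w 2} on $[t_0,t_1]$. Since $|\underline{z}|\le k\underline{w}^{q-p+2}$ and $(\underline{w}+\underline{z})^{p-2}=\underline{w}^{p-2}(1+o(1))$, the integrand satisfies
\[
-\underline{z}-\frac{\underline{w}^{q}}{(p-1)(\underline{w}+\underline{z})^{p-2}}\le k\underline{w}^{q-p+2}-\frac{\underline{w}^{q-p+2}}{p-1}(1+o(1))=-\Big(\frac{\varepsilon_0}{(1+\varepsilon_0)(p-1)}+o(1)\Big)\underline{w}^{q-p+2},
\]
which is negative. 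Therefore
\[
\underline{z}(t_1)-\underline{z}(t_0)\le -c\int_{t_0}^{t_1}\underline{w}^{q-p+2}\,dt,\qquad c>0 .
\]
By the lower bound from the previous step, $\underline{w}^{q-p+2}\gtrsim t^{-(q-p+2)\frac{n-p}{p(p-1)}}$. Using $q-p+1=\frac{p(p-1)}{n-p}$, this exponent equals
\[
(q-p+2)\frac{n-p}{p(p-1)}=1+\frac{n-p}{p(p-1)} .
\]
That exceeds $1$, so the integral converges and the crude estimate gives no contradiction.

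The main obstacle is therefore to compare the decay rate of $\underline{z}$ with $-k\underline{w}^{q-p+2}$ rather than merely to integrate. My plan is to consider the ratio $\rho=\underline{z}/\underline{w}^{q-p+2}\in[-k,0)$ and show that it must drift below $-k$. Differentiating (distributionally, again using downward jumps), we have
\[
\rho_t\le \underline{w}^{-(q-p+2)}\big(\underline{z}_t\big)-(q-p+2)\rho\,\underline{z}\,\underline{w}^{-1}.
\]
Here $\underline{z}\underline{w}^{-1}=\rho\underline{w}^{q-p+1}\to0$, while
\[
\underline{w}^{-(q-p+2)}\underline{z}_t\le-\frac{1}{p-1}+k+o(1)=-\frac{\varepsilon_0}{(p-1)(1+\varepsilon_0)}+o(1).
\]
Hence $\rho_t\le -c'<0$ for large $t$, so $\rho(t)\to-\infty$. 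This contradicts $\rho\ge-k$, and so the trajectory enters $\Theta_k^{\delta_0}$ at some $t_0\ge T$. The delicate point is to justify this ratio computation for the merely semi-concave $\underline{w}$, using that the singular parts of $\underline{z}_t$ are nonpositive and that $\underline{w}$ is Lipschitz. I would do this by working with the integrated form on intervals $[t,t+1]$, exactly as in the proof of Lemma \ref{key lemma to prove thm 1}.
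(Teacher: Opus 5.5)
Your proof is correct, but it follows a different route from the paper and is more roundabout than it needs to be.

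\textbf{The two routes.} The paper regards $\underline{z}$ as a function of $\underline{w}$, which is possible because $\underline{w}$ is strictly decreasing for $t\ge T$. It shows $d\underline{z}/d\underline{w}\ge\varepsilon_0/2$ in the sense of distributions; the downward jumps of $\underline{z}$ in $t$ become positive Dirac masses in the $\underline{w}$-variable. It then integrates from $\underline{w}=0$, where $\underline{z}=0$, to get $\underline{z}>0$, contradicting Lemma~\ref{z less than 0}.

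Your ratio $\rho=\underline{z}/\underline{w}^{q-p+2}$ stays in the $t$-variable. It needs only the product rule for a BV function times a positive locally Lipschitz one. This avoids carrying the singular part of $\underline{z}_t$ through the change of variables $t\mapsto\underline{w}$, a step the paper handles only informally. It also gives an explicit bound, about $k/c'$ once $\underline{w}$ is small, on how long the trajectory can stay outside $\Theta_k^{\delta_0}$. The paper's argument is shorter but purely by contradiction.

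\textbf{The step you discarded already finishes the proof.} You wrote that $\underline{z}(t_1)-\underline{z}(t_0)\le -c\int_{t_0}^{t_1}\underline{w}^{q-p+2}\,dt$ ``gives no contradiction'' because the integral converges. In fact you had also shown $|\underline{z}|\le k\underline{w}^{q-p+2}\to0$. Letting $t_1\to\infty$ then gives $\underline{z}(t_0)\ge c\int_{t_0}^{\infty}\underline{w}^{q-p+2}\,dt>0$, which contradicts $\underline{z}<0$. The contradiction is one of sign, not of blow-up: a strictly decreasing function tending to $0$ must be positive. This is exactly the paper's argument written in the $t$-variable. It also makes your first step, the lower bound on $\underline{w}$, unnecessary, since that bound was used only to (wrongly) dismiss this estimate.
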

	
	\begin{proof}
		
		Suppose by contradiction that no such $t_0$ exists; equivalently, $$\underline{z}(t)\ge-k\underline{w}(t)^{q-p+2} \text{ for all } t\ge T.$$
		Since $\underline{z}<0$ and $\underline{w}$ is monotonically
		decreasing in $t$, we may regard $\underline{z}$ as a function of
		$\underline{w}$.
		In particular
		\begin{equation}
			\lim_{\underline{w}\to0}\underline{z}(\underline{w})=0.\label{z go to 0}
		\end{equation}
		
		However, on the other hand
		
		\[
		\frac{d\underline{z}}{dt}\le-\underline{z}-\frac{\underline{w}^{q}}{(p-1)(\underline{w}+\underline{z})^{p-2}}
		\]
		in the sense of distributions. 
		And if $t\ne\tilde{t}_{i}$, 
		\[
		\frac{d\underline{w}}{dt}=\underline{z}<0.
		\]
		For such $t\ne\tilde{t}_{i}$, since $\underline{z}\ge-k\underline{w}^{q-p+2}$, we obtain
		\begin{align*}
			\frac{d\underline{z}}{d\underline{w}} & \ge-1-\frac{\underline{w}^{q}}{(p-1)(\underline{w}+\underline{z})^{p-2}\underline{z}}\\
			& \ge-1-\frac{\underline{w}^{p-2}}{k(p-1)(\underline{w}+\underline{z})^{p-2}}\\
			& \ge-1-\frac{1+\varepsilon}{(1-k\underline{w}^{q-p+1})^{p-2}}\\
			& \ge\frac{\varepsilon}{2},
		\end{align*}
		for $t>0$ large enough.
		If $t\ne\tilde{t}_{i}$, since 
		\[
		\lim_{t\to\tilde{t}_{i}^{-}}\underline{z}(t)>\underline{z}(\tilde{t}_{i}),
		\]
		it follows that
		\[
		\frac{d\underline{z}}{d\underline{w}}|_{t=\tilde{t}_{i}}=m_{i}\delta(t-\tilde{t}_{i})
		\]
		for $m_{i}>0$, where $\delta$ denotes the Dirac measure. Consequently, we have
		\[
		\frac{d\underline{z}}{d\underline{w}}\ge\frac{\varepsilon}{2}
		\]
		in the sense of distributions for sufficiently large $t$.
		This contradicts  the fact that $\underline{z}(\underline{w})<0$. 
		
	\end{proof}
	
	\section{Proof of Theorem \ref{main thm 2}}
	
	Recall that by \cite[Proposition 1.1]{BV1989}, we have $w\in L_{loc}^{s}(B_{1})$ for all $s\in[1,\infty)$ and $\nabla w\in L_{loc}^{q}(B_{1})$ for all $q\in[1,n)$. Moreover, if $\lim_{x\to0}w(x)=\infty,$ then $w$ satisfies
	\[
	-\Delta_{n}w=\tau\delta(x)+g \quad \text{in } \mathcal{D}'(B_1),
	\]
	where $\tau\ge0$ and $g\in L^{1}(B_{1})$ is nonnegative. By Theorem \ref{thm:MQ_old}, we also have
	\[
	\lim_{x\to0}\frac{w(x)}{\log\frac{1}{|x|}}=m\ge0.
	\]
	
	It remains to analyze the case $m=0$. We claim that $m=0$ implies $\tau=0$. Indeed, if $\tau>0$, then by Theorem \ref{thm:KM94}, there exists a constant $C>0$ such that
	\[
	w(x)\ge C\,W_{1,n}^{\tau\delta(x)+g}\!\big(x,\tfrac{1}{10}\big)\qquad\text{for }x\in B_{1/2}.
	\]
	According to \cite[Theorem~1]{LMQZ2023b}, this lower bound implies
	\[
	\liminf_{x\to 0}\frac{w(x)}{\log\frac{1}{|x|}}>0,
	\]
	which contradicts the assumption $m=0$. Thus, we must have $\tau=0$.
	
	Since $m=0$, for any $\varepsilon>0$, there exists $r_{\varepsilon}>0$ such that
	\[
	0\le w(x)\le \varepsilon\log\frac{1}{|x|}\qquad\text{for }0<|x|<r_\varepsilon.
	\]
	This logarithmic growth bound implies that $e^{\kappa w}\in L_{loc}^{s}(B_{1})$ for every $s\in[1,\infty)$. Combining this integrability with the gradient estimate in \cite[Theorem 5.17]{Li2017}, we deduce that
	\[
	|\nabla w|^{\beta}e^{\kappa w}\in L^{\gamma}(B_{1}) \quad \textrm{for some }\gamma>1.
	\]
	Consequently, the structure conditions (5), (6), and (8) in \cite{Se1964} are satisfied, and we may invoke \cite[Theorem~8]{Se1964} to conclude that $w$ is H{\"o}lder continuous in $B_1$. Finally, by \cite[Theorem~2]{Di1983} (or \cite[Theorem~1]{To1984}), it follows that $\nabla w$ is also H{\"o}lder continuous in $B_1$. This completes the proof.

	\vspace{1\baselineskip}
	
	\noindent \textbf{Acknowledgements:} The first named author is supported by Natural Science Foundation of Tianjin (Grant No. 22JCJQJC00130), the Fundamental Research Funds for the Central Universities.

	\bigskip
	
	\noindent Shiguang Ma: School of Mathematical Sciences and LPMC, Nankai University\\ 
	Tianjin 300071, China \\
	\textit{e-mail}: msgdyx8741@nankai.edu.cn \\
	
	\medskip
	
	\noindent Shengyang Zang: School of Mathematical Sciences, Nankai University\\ 
	Tianjin 300071, China\\
	\textit{e-mail}: 1120230032@mail.nankai.edu.cn

\end{document}